\newtheorem{thm}{Theorem}[section]
\newtheorem{coro}{Corollary}[section]
\newtheorem{lm}{Lemma}[section]
\newtheorem{prop}{Proposition}[section]
\newtheorem{defn}{Definition}[section]
\numberwithin{equation}{section}
\newenvironment{proof}[1][Proof]{\noindent\textbf{#1.} }{\ \rule{0.5em}{0.5em}}
\renewcommand{\phi}{\varphi}
\renewcommand{\chi}{\mathcal{X}}
\newcommand{\tr}{\rm{tr}}
\begin{document}

\title{On Variational Properties of Quadratic Curvature Functionals}
\author{{ SHENG Weimin  \ \ and \ \ WANG Lisheng } }
\date{}
\maketitle

\begin{abstract}
  In this paper, we investigate a class of quadratic Riemannian curvature functionals on closed smooth manifold $M$ of dimension $n\ge 3$ on the space of Riemannian metrics on $M$ with unit volume. We study the stability of these functionals at the metric with constant sectional curvature as its critical point. 

\end{abstract}

{\footnotetext{\baselineskip 10pt
The authors were supported by NSFC 11571304.}

\vspace{1mm}{\footnotesize \textbf{Keywords}:\hspace{2mm} Quadratic curvature functional, Variational, Transverse-traceless, Conformal variation, Stability
 }

{\footnotesize{\bf MSC(2000):\hspace{2mm} 58E11, 53C24}
\vspace{2mm} \baselineskip 15pt \renewcommand{\baselinestretch}{1.22}
\parindent=10.8pt

\section{Introduction}

 Let $M$ be an $n$-dimensional compact and smooth manifold, and $\mathscr{M}_1$ the space of smooth Riemannian metrics on $M$  with unit volume, i.e. $\mathscr{M}_1=\left\{g\in \mathscr{M}: vol(g)=1\right\}$, where $\mathscr{M}$ is the space of smooth Riemannian metrics on $M$.  A functional $F: \mathscr{M}\rightarrow R$ is called Riemannian if it is invariant under  the action of the diffeomorphism group. 

 Recall the decomposition of Riemannian curvature tensor $Rm$
 \begin{equation}
 Rm=W+\frac{1}{n-2}Ric\odot g-\frac{1}{(n-1)(n-2)}Rg\odot g, \label{decomposition}
\end{equation}
where $W$, $Ric$ and $R$ denote the Weyl curvature tensor, the Ricci tensor and the scalar curvature, respectively, and $\odot$ the Kulkarni-Nomizu product. From \eqref{decomposition},
we have,
\begin{equation}
|Rm|^2=|W|^2+\frac{4}{n-2}|Ric|^2 -\frac{2}{(n-1)(n-2)}R^2. \label{decomposition1}
\end{equation}
 The basic quadratic curvature functionals are
 \[
 \mathcal{W}=\int_M|W|^2dV_g, \ \ \ \rho=\int_M|Ric|^2dV_g, \ \ \ \mathcal{S}=\int_M R^2dV_g.
 \]
From the decomposition formula \eqref{decomposition1}, one has 
\[
\mathcal{R}=\int_M|Rm|^2dV_g=\int_M|W|^2dV_g+\frac{4}{n-2}\int_M|Ric|^2dV_g -\frac{2}{(n-1)(n-2)}\int_MR^2dV_g.
\]

We point out that in dimension three, Weyl tensor vanishes, and in dimension four, the famous Chern-Gauss-Bonnet formula implies that $\mathcal{W}$ can be expressed as a linear combination of $\rho$ and $\mathcal{S}$ with the addition of a topological term. There are many results on these quadratic functionals. See \cite{Besse, Blair2000, Berger1969, Viaclovsky2014} for example. In \cite{GV2003-2}, Gursky and Viaclovsky focus attention on a class of general quadratic curvature functionals
\[
\mathcal{\widetilde{F}}_\tau=(Vol)^{\frac{4-n}{n}}\big(\int_M|Ric|^2dV_g+\tau \int_M|R|^2dV_g\big).
\]
They investigate rigidity and stability properties of critical points of $\mathcal{\widetilde{F}}_\tau$
on the space of Riemannian metric space $\mathscr{M}$, and obtain a series of beautiful results.

In this paper, we study a class of more general quadratic curvature functionals:
\begin{equation}
\mathcal{F}_{s,\tau}=\int_M|Rm|^2dV_g+s\int_M|Ric|^2dV_g+\tau\int_MR^2dV_g\ \label{F-stau}
\end{equation}
on Riemannian metrics space $\mathscr{M}_1$, where $s,\tau$ are some constants.

Actually, $\mathcal{F}_{s,\tau}$ have been widely studied. We first introduce the following definition. 

\begin{defn}
Let $M$ be a compact $n$-dimension manifold, a critical metric $g$ for $\mathcal{F}_{s,\tau}$ is called a local minimizer if for all metrics $\bar{g}$ in a $C^{2,\alpha}$-neighborhood of $g$, satisfying:
\[
\mathcal{F}_{s,\tau}[\bar{g}]\geq \mathcal{F}_{s,\tau}[g].
\]
\end{defn}
\quad We can also define the local maximizer for $\mathcal{F}_{s,\tau}$ by the same way.

In \cite{Muto1974}, Y. Muto studied Riemannian functional $I[g]=\int_M|Rm|^2dV_g$ on $\mathscr{M}_1$,  and proved when $M$ is a $C^\infty$ manifold diffeomorphic to $S^n$, the mapping ${I}:\mathscr{M}_1/\mathcal{D}\rightarrow R$  has a local minimum at the Riemannian metric $\bar{g}$ of positive constant sectional curvature, 
where ${I}:\mathscr{M}_1/\mathcal{D}\rightarrow R$ is a mapping deduced from $I:\mathscr{M}_1\rightarrow R$  and $\mathcal{D}$ is the diffeomorphism group of $M$ and $\mathscr{M}_1/\mathcal{D}$ is the space of orbits generated by $\mathcal{D}$ of Riemannian metrics. Recently, S. Maity \cite{Maity2013} generalized the result for functional $\mathcal{R}_p(g)=\int_M|Rm|^pdV_g$ on  $\mathscr{M}_1$, she proved that for a compact Riemannian manifold $(M,g)$, if $g$ is either a spherical space form and $p\in [2,\infty)$, or a hyperbolic manifold and $p\in [\frac{n}{2},\infty)$, then $g$ is strictly stable for  $\mathcal{R}_p$. O. Kobayashi \cite{Kobayashi1984} investigated variational properties of the conformally invariant functional $v(g)=\frac{2}{n}\int_M|W|^\frac{n}{2}$, and derived that when $n=4$, $S^2(1)\times S^2(1)$ endowed with the standard Einstein metric $g$ is a strictly stable critical point of $v(g)$. In \cite{Guo2015}, X. Guo, H. Li and G. Wei developed the result to the case of dimension $n=6$.

In this paper,  we concern the various properties for quadratic curvature functional  $\mathcal{F}_{s,\tau}$. Before giving our results, we state some background knowledge.

 Recall a canonical decomposition of tangent space of $\mathscr{M}$. Define the divergence operator $\delta_g: S^2(M)\rightarrow T^*M$ by
 \[
 (\delta_g h)_j=g^{pq}h_{pj,q}.
 \]
 and $\delta^*_g:  T^*M\rightarrow S^2(M)$ its $L^2-$adjoint operator
 \[
 (\delta^*_g\omega)_{ij}=-\frac{1}{2}(\omega_{i,j}+\omega_{j,i}),
 \]
 in local coordinates $\{e_i\}_{1\le i\le n}$, where $h_{ij,k}=\nabla_kh_{ij}$ and $\omega_{i,j}=\nabla_j\omega_i$, $\nabla$ is the Riemannian covariant derivative of $(M, g)$.
 For a compact Riemannian manifold $M$, the tangent space of $\mathscr{M}$ at $g$, which is denoted by $T_g\mathscr{M}$ has the orthogonal decomposition (see \cite{Besse} Lemma 4.57):
 \begin{equation}
 T_g\mathscr{M}=S^2(M)=(Im\delta_g^*+C^\infty(M)\cdot g)\oplus(\delta_g^{-1}(0)\cap \tr_g^{-1}(0)),\label{tangentdecomp}
 \end{equation}
where $Im\delta_g^*$ is just the tangent space of the orbit of $g$ under the action of the group of diffeomorphisms of $M$. For $T_g\mathscr{M}_1=\{h\in S^2(M): \int_Mtr_g(h)dV_g=0\}$, we have:
 \begin{equation}
 T_g\mathscr{M}_1=\left((Im\delta_g^*+C^\infty(M)\cdot g)\cap T_g\mathscr{M}_1\right)\oplus(\delta_g^{-1}(0)\cap \tr_g^{-1}(0)).\label{tangentdecomp1}
 \end{equation}

\begin{defn}
Let $h\in S^2(M)$, then $h$ is called transverse-traceless (TT for short) if $\delta_gh=0$ and $\tr_gh=0$.
\end{defn}

On an Einstein manifold, by \cite{Lich1961}, we have the Lichnerowicz Laplacian
\begin{equation}
\triangle_Lh_{ij}=\triangle h_{ij}+2R_{ikjl}h^{kl}-\frac{2}{n}Rh_{ij},\label{Lichnerowicz}
 \end{equation}
where $\triangle$ is the rough Laplacian, and $\triangle_L$ maps the space of transverse-traceless tensors to iteself (see \cite{GV2003-2} also). By use of \ref{Lichnerowicz}, we can get following Propositions 1.1-1.2 easily. 

\begin{prop} Let $(M,g)$ be a compact manifold with constant sectional curvature $1$. Then the least eigenvalue of the Lichnerowicz Laplacian on TT-tensors is $4n$.\label{LLposieigen}
\end{prop}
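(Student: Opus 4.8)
The plan is to substitute the space-form curvature tensor into the Lichnerowicz formula \eqref{Lichnerowicz} and thereby reduce the assertion to a sharp spectral estimate for the rough Laplacian on transverse-traceless tensors. Since $(M,g)$ has constant sectional curvature $1$ we have $Ric=(n-1)g$ and $R=n(n-1)$, and for any trace-free symmetric $2$-tensor $h$ the curvature term collapses, $R_{ikjl}h^{kl}=(\tr_g h)\,g_{ij}-h_{ij}=-h_{ij}$. Feeding this into \eqref{Lichnerowicz} shows that, on TT tensors over $(M,g)$, the Lichnerowicz Laplacian differs from the rough Laplacian by a constant; normalising the rough Laplacian to be $\nabla^*\nabla\ge 0$, this reads
\[
\triangle_L=\nabla^*\nabla+2n \qquad\text{on TT tensors.}
\]
Its eigenvalues on TT tensors are therefore exactly $2n+\mu$, with $\mu$ an eigenvalue of $\nabla^*\nabla$ restricted to TT tensors, so the proposition is equivalent to the sharp Poincar\'e-type inequality $\int_M|\nabla h|^2\,dV_g\ge 2n\int_M|h|^2\,dV_g$ for all TT tensors $h$, with equality attained.

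Establishing this sharp constant $2n$ — equivalently, pinning down the lowest eigentensor — is the crux. On the round sphere $S^n$ it is a classical harmonic-analysis computation: the $L^2$-sections of the bundle of TT symmetric $2$-tensors decompose under $SO(n+1)$ into irreducibles, the bottom one being the representation of highest weight $(2,2,0,\dots,0)$, and the corresponding Casimir eigenvalue gives $\nabla^*\nabla=2n$ there; equivalently one may quote that the full spectrum of $\triangle_L$ on TT tensors over $S^n$ is $\{\,k(k+n-1)+2(n-1):k\ge 2\,\}$, whose minimum, attained at $k=2$, is $4n$ (see e.g.\ \cite{Besse}). A partly elementary route to at least the lower bound: for a TT tensor $h$ on $(M,g)$, commuting covariant derivatives and using $\delta_g h=0$, $\tr_g h=0$ gives the identity $\nabla^i\nabla_j h_{ik}=-n\,h_{jk}$, hence $\int_M\langle\nabla_i h_{jk},\nabla_j h_{ik}\rangle\,dV_g=n\int_M|h|^2\,dV_g$; combining this with $0\le\int_M|\nabla_i h_{jk}-\nabla_j h_{ik}|^2\,dV_g$ yields $\int_M|\nabla h|^2\,dV_g\ge n\int_M|h|^2\,dV_g$, i.e.\ $\triangle_L\ge 3n$ on TT tensors.

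The main obstacle is closing the gap between this soft Bochner bound $3n$ and the sharp value $4n$: the Codazzi-type identity above does not exploit the trace-free condition strongly enough, so one must either appeal to the representation theory of $S^n$ (or quote the known TT-spectrum) or locate a finer integral identity that also uses $\tr_g h=0$. Finally, for a general compact $(M,g)$ of constant sectional curvature $1$ — that is, $M=S^n/\Gamma$ — the bound $\triangle_L\ge 4n$ on TT tensors is automatic, the TT-spectrum of $M$ being a subset of that of $S^n$; equality, hence the exact value $4n$, persists whenever $\Gamma$ fixes one of the lowest eigentensors (as it does for $S^n$ itself and, for instance, for $\mathbb{R}P^n$), and this last point is the one to verify if the statement is wanted in full generality.
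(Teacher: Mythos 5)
Your reduction of the statement to the sharp inequality $\int_M|\nabla h|^2\,dV_g\ge 2n\int_M|h|^2\,dV_g$ for TT tensors (equivalently $-\triangle_L=\nabla^*\nabla+2n\ge 4n$) is exactly the paper's starting point, but from there the routes diverge, and your "elementary route" contains a sign error. With the paper's conventions one finds, for a TT tensor on a space form of curvature $+1$, that $\nabla^i\nabla_j h_{ik}=+n\,h_{jk}$ (commute derivatives, use $\delta_g h=0$ and $\mathrm{tr}_g h=0$), hence $\int_M\langle\nabla_i h_{jk},\nabla_j h_{ik}\rangle\,dV_g=-n\int_M|h|^2\,dV_g$, not $+n\int_M|h|^2\,dV_g$ as you claim. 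Consequently $0\le\int_M|\nabla_i h_{jk}-\nabla_j h_{ik}|^2\,dV_g$ expands to $2\int|\nabla h|^2+2n\int|h|^2\ge 0$ and gives no information at all in positive curvature (it is the right tool in the hyperbolic case, and is what the paper uses for Proposition \ref{negeigen}); so even your intermediate bound "$\triangle_L\ge 3n$" is not established by that computation. The "finer integral identity" you say one must locate is precisely the paper's trick: use the symmetrized combination $0\le\int_M|h_{ij,k}+h_{jk,i}+h_{ki,j}|^2\,dV_g=3\int_M|\nabla h|^2\,dV_g+6\int_M h_{ij,k}h_{jk,i}\,dV_g$, which with the correct cross-term value $-n\int|h|^2$ yields $\int|\nabla h|^2\ge 2n\int|h|^2$ in one line — the sharp constant, with equality exactly for tensors satisfying $h_{ij,k}+h_{jk,i}+h_{ki,j}=0$.

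Your fallback route — quoting the TT-spectrum $k(k+n-1)+2(n-1)$, $k\ge 2$, of $S^n$ (minimum $4n$ at $k=2$) and passing to quotients $S^n/\Gamma$ — is a legitimate alternative proof of the lower bound and is genuinely different from the paper's self-contained two-line argument; it buys the exact spectrum on $S^n$ at the cost of an external citation. Your caveat about attainment on general spherical space forms is fair: both your argument and the paper's really establish the eigenvalue bound $\lambda_L\ge 4n$ (with equality realized on $S^n$ itself), and only this lower bound is used later in the proof of Theorem 1.1.
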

\begin{proof} By \eqref{Lichnerowicz}, the Lichnerowicz Laplacian on TT tensor $h$ is 
\[
\triangle_Lh=\triangle h-2nh.
\]
Then by use of the  inequality
\[
0\le \int_M|h_{ij,k}+h_{jk,i}+h_{ki,j}|^2dV_g,
\]
exchanging covariant derivatives and integrating by parts, we have that the least eigenvalue of the Lichnerowicz Laplacian on TT-tensors is $4n$.
\end{proof}

\begin{prop}Let $(M,g)$ be a compact manifold with constant sectional curvature $-1$. Then the least eigenvalue of the Lichnerowicz Laplacian on TT-tensors is bounded below by $-n$.
\label{negeigen}
\end{prop}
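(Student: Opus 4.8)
The plan is to carry out the argument of Proposition~\ref{LLposieigen} with sectional curvature $-1$ in place of $+1$. Since $(M,g)$ has constant sectional curvature $-1$, its curvature tensor is $R_{ikjl}=g_{il}g_{kj}-g_{ij}g_{kl}$, whence $Ric=-(n-1)g$ and $R=-n(n-1)$; in particular, for a TT-tensor $h$ (so that $\tr_g h=0$) we get $R_{ikjl}h^{kl}=h_{ij}$. The first step is therefore to substitute this together with $R=-n(n-1)$ into \eqref{Lichnerowicz} and record that on TT-tensors
\[
\triangle_L h_{ij} = \triangle h_{ij} + 2R_{ikjl}h^{kl} - \tfrac{2}{n}Rh_{ij} = \triangle h_{ij} + 2n h_{ij},
\]
i.e. $\triangle_L h = \triangle h + 2nh$ on TT-tensors, the hyperbolic analogue of the identity $\triangle_L h=\triangle h-2nh$ from the spherical case.

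Next, exactly as in Proposition~\ref{LLposieigen}, I would begin from the pointwise inequality
\[
0\le \int_M |h_{ij,k}+h_{jk,i}+h_{ki,j}|^2\, dV_g .
\]
Expanding the square, using the symmetry of $h$ and relabelling the summation indices, the three ``diagonal'' terms each contribute $\int_M |\nabla h|^2\,dV_g$ while the six ``cross'' terms all equal $\int_M h_{ij,k}h_{jk,i}\,dV_g$, so
\[
0\le \int_M \big(\,3|\nabla h|^2 + 6\,h_{ij,k}h_{jk,i}\,\big)\,dV_g .
\]
The heart of the computation is to rewrite the cross term by one integration by parts,
\[
\int_M h_{ij,k}h_{jk,i}\,dV_g = -\int_M h^{ij}\,\nabla^k\nabla_i h_{jk}\,dV_g ,
\]
and then to commute the covariant derivatives: $\nabla^k\nabla_i h_{jk}=\nabla_i(\delta_g h)_j + g^{km}[\nabla_m,\nabla_i]h_{jk}$, where the first term is zero because $\delta_g h=0$ and the commutator term, computed from the constant-curvature form of $Rm$ together with $\tr_g h=0$, is a definite multiple of $h_{ij}$. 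Inserting this back yields an estimate $\int_M|\nabla h|^2\,dV_g\ge \kappa_n\int_M|h|^2\,dV_g$ with an explicit constant $\kappa_n$; combining it with $\triangle_L h=\triangle h+2nh$ and the eigenvalue equation then shows that the least eigenvalue of $\triangle_L$ on TT-tensors is bounded below by $-n$.

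The step I expect to demand the most care is the sign bookkeeping in this final combination, since this is where the spherical and hyperbolic cases genuinely part ways: for curvature $+1$ the curvature contribution in the Bochner-type identity reinforces the gradient term and one lands on the sharp value $4n$, whereas for curvature $-1$ it works against it, so that the inequality only closes to the weaker bound $-n$. One must therefore track signs carefully through the commutation of covariant derivatives and the trace contractions to verify both that the estimate still produces a finite lower bound and that this bound is precisely $-n$.
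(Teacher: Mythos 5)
Your reduction $\triangle_L h=\triangle h+2nh$ on TT-tensors for $\lambda=-1$ is correct, and so is the general Bochner bookkeeping: after one integration by parts and one commutation of derivatives, the cross term is
\begin{equation*}
\int_M h_{ij,k}h^{jk,i}\,dV_g \;=\; -n\lambda\int_M|h|^2\,dV_g
\end{equation*}
on a space form of curvature $\lambda$, whenever $\delta_g h=0$ and $\tr_g h=0$. But this is exactly where your plan fails: for $\lambda=-1$ this cross term equals $+n\int_M|h|^2\,dV_g$, so the symmetric inequality you propose to reuse from Proposition \ref{LLposieigen} becomes $0\le \int_M\big(3|\nabla h|^2+6n|h|^2\big)dV_g$, which is vacuous. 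It cannot produce any estimate $\int_M|\nabla h|^2\,dV_g\ge\kappa_n\int_M|h|^2\,dV_g$ with $\kappa_n>0$; combined with $\int_M\langle-\triangle_L h,h\rangle\,dV_g=\int_M\big(|\nabla h|^2-2n|h|^2\big)dV_g$, the best it (or even the trivial bound $\int|\nabla h|^2\ge0$) yields is $\lambda_L\ge-2n$, strictly weaker than the asserted $-n$. Your closing remark that for curvature $-1$ the inequality ``only closes to the weaker bound $-n$'' is therefore unsubstantiated: with your chosen quadratic expression it does not close at all, and no amount of careful sign bookkeeping in that expression will recover $-n$.

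The missing idea---and what the paper actually uses---is to change the quadratic expression in the hyperbolic case: start instead from $0\le\int_M|h_{ij,k}-h_{ik,j}|^2\,dV_g$, i.e. the failure of $\nabla h$ to be a Codazzi tensor. Its expansion is $2\int_M|\nabla h|^2\,dV_g-2\int_M h_{ij,k}h^{ik,j}\,dV_g$, and since $\int_M h_{ij,k}h^{ik,j}\,dV_g=\int_M h_{ij,k}h^{jk,i}\,dV_g$ by the symmetry of $h$, the same cross term now enters with a minus sign, so negative curvature helps rather than hurts: for $\lambda=-1$ one gets $\int_M|\nabla h|^2\,dV_g\ge n\int_M|h|^2\,dV_g$, hence $\int_M\langle-\triangle_L h,h\rangle\,dV_g\ge(n-2n)\int_M|h|^2\,dV_g=-n\int_M|h|^2\,dV_g$, which is Proposition \ref{negeigen}. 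So the argument is salvageable, but only after replacing the symmetrized combination by the antisymmetrized one; as written, your proof does not establish the stated bound.
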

\begin{proof} Make use of  the inequality 
\[
\int_M|h_{ij,k}-h_{ik,j}|^2dV_g\geq 0, 
\]
 we can get the result in the same way. 
\end{proof}

It can be easily to see (Corollary 2.2 below) that the metric $g$ with constant sectional curvature must be a critical point of $\mathcal{F}_{s,\tau}$. In this paper we study the stability of $\mathcal{F}_{s,\tau}$ at the critical point $g$ with constant sectional curvature. 
 We have the following 
\begin{thm}Let $(M,g)$ be a $n$-dimensional compact manifold with constant sectional curvature $\lambda=1$, then restricted to transverse-traceless variations, the following hold:
\begin{itemize}
 \item[(1)] If $s>-4$, $\tau<\frac{6n-12}{n(n-1)}$, the second variation of $\mathcal{F}_{s,\tau}$ on $(M,g)$ is non-negative. Therefore $\mathcal{F}_{s,\tau}$ gets its local minimum in TT directions; 
\item[(2)] If $s<-4$, $\tau>\frac{6n-12}{n(n-1)}$, the second  variation of $\mathcal{F}_{s,\tau}$ on $(M,g)$ is non-positive. Therefore $\mathcal{F}_{s,\tau}$ gets its local maximum in TT directions.
\end{itemize}
\end{thm}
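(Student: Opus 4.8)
\noindent The plan is to compute the second variation of $\mathcal{F}_{s,\tau}$ at the constant-curvature metric $g$ along a transverse-traceless (TT) tensor $h$, reduce it to a diagonal quadratic form in the spectrum of the Lichnerowicz Laplacian $\triangle_L$, and then perform an elementary sign analysis of a quadratic polynomial; the decisive input is the bound $\mu\ge 4n$ on the $\triangle_L$-eigenvalues $\mu$ coming from Proposition~\ref{LLposieigen}. First, using \eqref{decomposition1} one writes
\[
\mathcal{F}_{s,\tau}=\mathcal{W}+\Big(\tfrac{4}{n-2}+s\Big)\rho+\Big(\tau-\tfrac{2}{(n-1)(n-2)}\Big)\mathcal{S},
\]
so it suffices to know the second variations of $\mathcal{W}=\int_M|W|^2\,dV_g$, $\rho=\int_M|Ric|^2\,dV_g$ and $\mathcal{S}=\int_M R^2\,dV_g$ along $h$. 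At constant sectional curvature $\lambda=1$ one has $Ric=(n-1)g$, $R=n(n-1)$ and $R_{ikjl}h^{kl}=-h_{ij}$ for TT $h$ (consistently with $\triangle_L h=\triangle h-2nh$, cf.\ the proof of Proposition~\ref{LLposieigen}). By the standard linearization formulas (recorded in Section~2 below; see also Besse and Gursky--Viaclovsky), for TT $h$ one has $R'_g(h)=0$ and $Ric'_g(h)=\tfrac12\triangle_L h$, while $W_g\equiv 0$ makes the Weyl contribution simply $\mathcal{W}''_g(h,h)=2\int_M|W'_g(h)|^2\,dV_g$, with $W'_g(h)$ the explicit second-order operator obtained by linearizing the Weyl tensor.

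The next step is to collapse each of these second variations into a polynomial in $\triangle_L$. On a constant-curvature manifold the curvature is parallel and the curvature endomorphism acts as the scalar $-1$ on TT tensors; hence, after using $\delta_g h=0$, $\tr_g h=0$ and commuting covariant derivatives, every natural operator that appears becomes a polynomial in the rough Laplacian $\triangle$, equivalently (since $\triangle_L=\triangle-2n$ on TT tensors) a polynomial in $\triangle_L$. Therefore each of $\mathcal{W}''_g$, $\rho''_g$, $\mathcal{S}''_g$ equals $\int_M\langle Q(\triangle_L)h,h\rangle\,dV_g$ for some polynomial $Q$ of degree at most two, and expanding $h=\sum_\mu h_\mu$ in $\triangle_L$-eigentensors, $\triangle_L h_\mu=\mu h_\mu$, gives
\[
\mathcal{F}_{s,\tau}''(h,h)=\sum_\mu P_{s,\tau}(\mu)\,\|h_\mu\|_{L^2}^2,\qquad P_{s,\tau}(\mu)=\tfrac{s+4}{2}\,\mu^2+b(s,\tau)\,\mu+c(s,\tau).
\]
The key structural fact that emerges is that the leading coefficient is $\tfrac{s+4}{2}$: the $\mathcal{S}$-term contributes nothing at order $\mu^2$ because $R'_g(h)=0$, while the $\mathcal{W}$- and $\rho$-contributions, together with the coefficients above, combine to exactly $\tfrac{s+4}{2}$ -- which correctly vanishes when $n=3$, where $W\equiv 0$.

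For the sign analysis, Proposition~\ref{LLposieigen} forces $\mu\ge 4n$ for every eigenvalue occurring, so $\mathcal{F}_{s,\tau}''$ is non-negative (resp.\ non-positive) on TT tensors if and only if $P_{s,\tau}(\mu)\ge 0$ (resp.\ $\le 0$) for all $\mu\ge 4n$. In case (1), $s>-4$ makes $P_{s,\tau}$ an upward parabola; computing $b$ and $c$ explicitly, one checks that $P_{s,\tau}\ge 0$ on $[4n,\infty)$, the extremal case being $\mu=4n$, where $P_{s,\tau}(4n)\ge 0$ precisely when $\tau\le\tfrac{6n-12}{n(n-1)}$. Under the strict hypotheses $s>-4$, $\tau<\tfrac{6n-12}{n(n-1)}$ one gets $P_{s,\tau}>0$ on $[4n,\infty)$, so $\mathcal{F}_{s,\tau}''$ is positive-definite on TT variations, whence $\mathcal{F}_{s,\tau}$ has a local minimum in TT directions, as in the cited works of Muto and Maity. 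Case (2) is the mirror image: $s<-4$ reverses the concavity of $P_{s,\tau}$ and $\tau>\tfrac{6n-12}{n(n-1)}$ reverses the sign of $P_{s,\tau}(4n)$, so $P_{s,\tau}\le 0$ on $[4n,\infty)$ and $g$ is a local maximum in TT directions.

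The main obstacle is the bookkeeping in the second step for the Riemann/Weyl term. Linearizing the full curvature tensor twice produces a large number of quadratic-in-$h$ expressions; one must use the second Bianchi identity together with the constant-curvature form $R_{ikjl}=\lambda(g_{ij}g_{kl}-g_{il}g_{kj})$ to cancel the cross terms, and then integrate by parts so that the curvature corrections generated by commuting derivatives reassemble the surviving terms into a genuine polynomial in $\triangle_L$. Carrying every numerical constant correctly through this computation is exactly what pins the thresholds to $s=-4$ and $\tau=\tfrac{6n-12}{n(n-1)}$; a slip in the constants would move the admissible region.
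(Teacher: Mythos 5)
Your overall strategy is the same as the paper's: express the second variation on TT tensors as a quadratic polynomial in $\triangle_L$, invoke the eigenvalue bound $\lambda_L\ge 4n$ of Proposition \ref{LLposieigen}, and do a sign analysis (routing the computation through $\mathcal{W},\rho,\mathcal{S}$ instead of $\mathcal{R},\rho,\mathcal{S}$ is only a cosmetic difference). However, there is a genuine gap: the step that actually decides the theorem --- the explicit coefficients $b(s,\tau)$ and $c(s,\tau)$, i.e.\ the analogue of the paper's formula \eqref{TTvariation}/\eqref{TT11} --- is never computed; you write ``computing $b$ and $c$ explicitly, one checks\dots'' and yourself identify this bookkeeping as the main obstacle. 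Since the thresholds $s=-4$ and $\tau=\frac{6n-12}{n(n-1)}$ are pinned precisely by those coefficients, the proof as written establishes nothing beyond the shape of the argument. Moreover, the one quantitative claim you do make about the polynomial is off: with the paper's factorization $P(\mu)=\bigl(\mu-2(n-1)\bigr)\bigl(\frac{4+s}{2}\mu-(2n+4)-(n-1)(2s+n\tau)\bigr)$ one has $P(4n)=(2n+2)\bigl(6n-4+2s-n(n-1)\tau\bigr)$, so $P(4n)\ge 0$ is \emph{not} equivalent to $\tau\le\frac{6n-12}{n(n-1)}$ for a general $s>-4$; that threshold only appears after combining with $2s>-8$, which is exactly how the paper argues below \eqref{TT11}.

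There is also a logical incompleteness in your sign analysis that the factored form repairs but the generic form $\frac{s+4}{2}\mu^2+b\mu+c$ does not. In case (1) you must know the upward parabola is minimized on $[4n,\infty)$ at $\mu=4n$ (equivalently, its vertex lies to the left of $4n$), and in case (2) a downward parabola with $P(4n)\le 0$ can still be positive on part of $(4n,\infty)$ if its vertex lies to the right of $4n$; neither point is addressed. In the paper both issues disappear because the second variation factors as a positive factor $\lambda_L-2(n-1)>0$ times a \emph{linear} function of $\lambda_L$ whose slope has the sign of $4+s$, so its extremum on $[4n,\infty)$ sits at $\lambda_L=4n$ automatically. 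To complete your argument you would need either to carry out the curvature linearization to obtain this factorization (or the explicit $b,c$) or to verify the vertex location in each case; as it stands, the decisive computation and the correct threshold inequality are both missing.
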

\begin{thm} Let $(M,g)$ be a $n$-dimensional compact manifold with constant sectional curvature $\lambda=-1$, then restricted to transverse-traceless variations, the following hold:
\begin{itemize}
\item[(1)] If $s>-4$, $\tau>\frac{6n-12}{n(n-1)}$, the second variation of $\mathcal{F}_{s,\tau}$ on $(M,g)$ is non-negative. Therefore $\mathcal{F}_{s,\tau}$ gets its local minimum in TT directions; 
\item[(2)] If $s<-4$, $\tau<\frac{6n-12}{n(n-1)}$, the second variation of $\mathcal{F}_{s,\tau}$ on $(M,g)$ is non-positive. Therefore $\mathcal{F}_{s,\tau}$ gets its local maximum in TT directions.
\end{itemize}
\end{thm}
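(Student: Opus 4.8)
The plan is to compute the second variation of $\mathcal{F}_{s,\tau}$ restricted to transverse-traceless directions at a space form $(M,g)$ with $\lambda=-1$, and to reduce it to an expression involving only the Lichnerowicz Laplacian $\triangle_L$ acting on the TT-tensor $h$. The key point is that for TT-variations the first variations of the individual functionals $\mathcal{R}$, $\rho$, $\mathcal{S}$ are well known (cf.\ \cite{Besse}, and the companion Theorem for $\lambda=1$), and on a space form the curvature operator simplifies drastically: $R_{ikjl}=\lambda(g_{ij}g_{kl}-g_{il}g_{kj})$, so that $R_{ikjl}h^{kl}=-\lambda h_{ij}$ on trace-free $h$, and $\triangle_L h=\triangle h-2(n-1)\lambda h + \tfrac{2}{n}\cdot\text{(scalar curvature terms)}$, which for $\lambda=-1$ becomes $\triangle_L h=\triangle h + 2(n-1)h - 2n h = \triangle h - 2h$ — matching the normalization used implicitly in Proposition \ref{negeigen}. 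First I would assemble, for each of the three building-block functionals, the TT-second-variation quadratic form; each comes out as $\int_M \langle h, P\, h\rangle\, dV_g$ for a second-order operator $P$ that, on a space form, is a polynomial in $\triangle$ (equivalently in $\triangle_L$) with constant coefficients depending only on $n$ and $\lambda$.

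Next I would combine these three quadratic forms with weights $1$, $s$, $\tau$ to get the second variation of $\mathcal{F}_{s,\tau}$ as $\int_M \langle h, Q(\triangle_L) h\rangle\, dV_g$, where $Q$ is an explicit quadratic (or at worst affine, after the space-form substitution for one factor of $\triangle$) polynomial whose coefficients are linear in $s$ and $\tau$. Expanding $h$ in an eigenbasis of $\triangle_L$ reduces non-negativity of the quadratic form to: for every eigenvalue $\mu$ of $\triangle_L$ on TT-tensors, one has $Q(\mu)\ge 0$. By Proposition \ref{negeigen}, all such $\mu$ satisfy $\mu\ge -n$. So the whole statement will follow once I show that the coefficient structure of $Q$, under the hypotheses $s>-4$ and $\tau>\tfrac{6n-12}{n(n-1)}$ (resp.\ the reversed inequalities for part (2)), forces $Q(\mu)\ge 0$ (resp.\ $\le 0$) on $[-n,\infty)$. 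I expect $Q$ to be, after simplification, essentially linear in $\mu$ with slope controlled by $s+4$ and with value at the endpoint $\mu=-n$ controlled by the combination $\tau - \tfrac{6n-12}{n(n-1)}$ (up to a positive factor); the two threshold conditions in the theorem are then exactly what make both the slope and the endpoint value have the right sign, and the sign convention flip between $\lambda=1$ and $\lambda=-1$ comes from Proposition \ref{LLposieigen} versus Proposition \ref{negeigen} (lower bound $4n>0$ versus $-n<0$), which reverses the role of the endpoint.

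Concretely, the steps in order: (i) recall/derive the TT first- and second-variation formulas for $\mathcal{R}$, $\rho$, $\mathcal{S}$; (ii) specialize all curvature tensors to the space form $\lambda=-1$ and rewrite everything through $\triangle_L$ using $\triangle_L h=\triangle h-2h$; (iii) form the weighted sum and read off $Q(\mu)$ with coefficients linear in $s,\tau$; (iv) impose $\mu\ge -n$ (Proposition \ref{negeigen}) and check the sign of $Q$ on that ray under each set of hypotheses; (v) conclude local minimality/maximality in TT directions, invoking Corollary 2.2 that $g$ is critical. The main obstacle will be step (i)–(ii): getting the second-variation quadratic forms of $\rho$ and especially $\mathcal{R}=\int|Rm|^2$ correct, since the general formulas involve the full Riemann tensor coupled to $h$ (terms like $R_{ikjl}\nabla^k\nabla^l h^{ij}$, $R_{ik}h^{kl}h^i_l$, etc.), and one must carefully commute covariant derivatives and integrate by parts; the saving grace is that on a space form every curvature contraction against a trace-free tensor collapses to a scalar multiple of $h$ or of $\triangle h$, so after the dust settles $Q$ depends on very few parameters. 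Once $Q$ is pinned down, step (iv) is an elementary one-variable sign analysis, and the parallel structure with the already-stated $\lambda=1$ theorem makes the bookkeeping of the sign flips routine.
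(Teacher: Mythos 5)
Your route is the same as the paper's: express the TT second variation at the space form as a (factored) quadratic polynomial in the Lichnerowicz Laplacian, expand $h$ in eigentensors, invoke the spectral lower bound of Proposition \ref{negeigen}, and do a one-variable sign analysis in which the linear factor has slope proportional to $s+4$ and its value at the bottom of the spectrum is governed by $\tau-\frac{6n-12}{n(n-1)}$; this is precisely formula \eqref{TTvariation}/\eqref{TT-1} together with the paper's proof of Theorem 1.2. However, the identity you single out as a ``key point'' is wrong, and it is exactly the input your step (ii) relies on. With the paper's conventions ($R_{ijkl}=\lambda(g_{ik}g_{jl}-g_{il}g_{jk})$, $\triangle$ the rough Laplacian, $\triangle_L h_{ij}=\triangle h_{ij}+2R_{ikjl}h^{kl}-\frac{2}{n}Rh_{ij}$), your own correct observation $R_{ikjl}h^{kl}=-\lambda h_{ij}$ on trace-free $h$ gives $\triangle_L h=\triangle h-2n\lambda h$, hence $\triangle_L h=\triangle h+2n h$ when $\lambda=-1$, not $\triangle h-2h$; note also that your formula is inconsistent with the normalization $\triangle_L h=\triangle h-2nh$ used in the proof of Proposition \ref{LLposieigen} for $\lambda=1$. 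This is not a harmless slip: writing $-\triangle_L h=\lambda_L h$, the paper's quadratic form is $\big(\lambda_L+2(n-1)\big)\big(\frac{4+s}{2}\lambda_L+(2n+4)+(n-1)(2s+n\tau)\big)$, the first factor is at least $n-2>0$ since $\lambda_L\ge -n$, and the $\tau$-threshold comes from evaluating the linear factor at the worst case $\lambda_L=-n$, which equals $4+\frac{3n-4}{2}s+n(n-1)\tau$ and changes sign exactly at $\tau=\frac{6n-12}{n(n-1)}$ when $s=-4$. A shift error of the kind you made ($\triangle h=\triangle_L h+2h$ instead of $\triangle h=\triangle_L h-2nh$) leaves the slope $\frac{4+s}{2}$ unchanged but displaces the argument of the polynomial by $2(n+1)$, so the endpoint evaluation, and hence the threshold on $\tau$, would come out wrong and you would not recover the statement of the theorem. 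Once you correct that identity (and keep the sign convention for $\lambda_L$ consistent with Propositions \ref{LLposieigen}--\ref{negeigen}), the remainder of your plan goes through and reproduces the paper's argument for both parts (1) and (2).
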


\begin{thm} Let $(M,g)$ be a compact manifold  with constant sectional curvature $\lambda=0$. Then the second variation of $\mathcal{F}_{s,\tau}$ at $g$ is non-negative as $s>-4$ and non-positive as $s<-4$ when the variation is restricted in TT directions.
\end{thm}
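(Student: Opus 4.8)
\medskip
\noindent\textbf{Proof proposal.}
The plan is to use that a metric $g$ of constant sectional curvature $0$ is flat, so that $Rm\equiv 0$, $Ric\equiv 0$ and $R\equiv 0$. First I would observe that the Euler--Lagrange tensor of $\mathcal F_{s,\tau}$ is polynomial in the curvature and its covariant derivatives, hence vanishes at such a $g$; thus $g$ is a critical point of $\mathcal F_{s,\tau}$ on $\mathscr M$ (this is Corollary 2.2), and a fortiori on $\mathscr M_1$. Since $g$ is already critical for the ambient functional, the second variation of $\mathcal F_{s,\tau}\big|_{\mathscr M_1}$ at $g$ along a direction $h\in T_g\mathscr M_1$ equals $\frac{d^2}{dt^2}\big|_{t=0}\mathcal F_{s,\tau}(g+th)$, and every TT tensor lies in $T_g\mathscr M_1$. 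Moreover, at the flat $g$ each of the integrands $|Rm_{g+th}|^2$, $|Ric_{g+th}|^2$, $R_{g+th}^2$ vanishes to second order in $t$, so the variation of the volume form and of the metric used to raise indices do not contribute at this order, and for $h$ a TT tensor one gets
\[
\mathcal F_{s,\tau}''(g)(h,h)=2\int_M|Rm'(h)|^2\,dV_g+2s\int_M|Ric'(h)|^2\,dV_g+2\tau\int_M\big(R'(h)\big)^2\,dV_g,
\]
where $Rm'(h)$, $Ric'(h)$, $R'(h)$ denote the linearizations at $g$.

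Next I would evaluate these linearizations on a TT tensor over the flat $g$. The standard formula for the linearized scalar curvature reads $R'(h)=\triangle(\tr_g h)+\delta_g\delta_g h-\langle Ric,h\rangle$ up to sign conventions, so $\tr_g h=0$, $\delta_g h=0$ and $Ric=0$ force $R'(h)\equiv 0$; in particular the $\tau$-term drops out entirely. Similarly $Ric'(h)=-\tfrac12\triangle_L h+\delta_g^*\delta_g h+\tfrac12\nabla^2(\tr_g h)$ up to conventions, and since $\triangle_L=\triangle$ on a flat manifold (every curvature term in \eqref{Lichnerowicz} vanishes) and $\delta_g h=0$, $\tr_g h=0$, this reduces to $Ric'(h)=\pm\tfrac12\triangle h$, where $\triangle$ is the rough Laplacian; hence $2s\int_M|Ric'(h)|^2\,dV_g=\tfrac s2\int_M|\triangle h|^2\,dV_g$.

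The substantive step is the term $\int_M|Rm'(h)|^2\,dV_g$. Here I would use that, up to sign and index conventions, $Rm'(h)_{ijkl}=\tfrac12\big(\nabla_i\nabla_k h_{jl}+\nabla_j\nabla_l h_{ik}-\nabla_i\nabla_l h_{jk}-\nabla_j\nabla_k h_{il}\big)$ modulo curvature terms that vanish on the flat $g$, together with the fact that covariant derivatives commute on a flat manifold. Expanding $|Rm'(h)|^2$ and exploiting the algebraic curvature symmetries of $Rm'(h)$ collapses the integrand to $|\nabla^2 h|^2$ plus three ``cross'' terms, each a contraction of two second covariant derivatives of $h$ in mixed index patterns. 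A single integration by parts, followed by commuting the derivatives, turns each cross term into an expression containing a factor $\nabla^p h_{pq}=(\delta_g h)_q=0$; therefore $\int_M|Rm'(h)|^2\,dV_g=\int_M|\nabla^2 h|^2\,dV_g$. Two further integrations by parts on the closed flat manifold (again commuting derivatives) give $\int_M|\nabla^2 h|^2\,dV_g=\int_M|\triangle h|^2\,dV_g$. Combining the three contributions,
\[
\mathcal F_{s,\tau}''(g)(h,h)=\Big(2+\frac s2\Big)\int_M|\triangle h|^2\,dV_g=\frac{s+4}{2}\int_M|\triangle h|^2\,dV_g,
\]
which is nonnegative when $s>-4$ and nonpositive when $s<-4$; this is precisely the claim of the theorem.

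The only step requiring genuine care is the index bookkeeping in $\int_M|Rm'(h)|^2\,dV_g$: one must keep track of the curvature symmetries of the linearized Riemann tensor and verify that each cross term really does acquire a divergence $\delta_g h$ after a single integration by parts. Everything else is immediate once flatness and the transverse-traceless conditions are used. As a consistency check, in dimension $n=4$ the choice $s=-4$, $\tau=1$ makes $\mathcal F_{s,\tau}$ (up to a topological constant) the Chern--Gauss--Bonnet integrand, whose second variation vanishes identically; consistently, $\tfrac{s+4}{2}=0$ in that case.
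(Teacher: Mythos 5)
Your proposal is correct, and it arrives at exactly the paper's quantitative conclusion (equation \eqref{TT0}: the TT Hessian at a flat metric equals $\tfrac{s+4}{2}\int_M|\triangle h|^2\,dV_g$), but by a genuinely different route. The paper first derives the full second variation formula at an arbitrary constant-curvature metric (Theorem 3.2, formula \eqref{TTvariation}) by linearizing the gradient tensor $G_{ij}$ of \eqref{Gij} term by term and packaging the result with the Lichnerowicz Laplacian, and only then sets $\lambda=0$. You instead exploit the special feature of the flat case: since $Rm$, $Ric$, $R$ all vanish at $g$, the gradient vanishes identically (so the Lagrange multiplier $c$ in \eqref{secondvariation} is zero and the constrained and unconstrained Hessians agree), and the second derivative of each quadratic integrand reduces to twice the squared norm of the corresponding linearization, giving $2\int|Rm'|^2+2s\int|Ric'|^2+2\tau\int(R')^2$. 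Your evaluation of these linearizations on TT tensors is sound: $R'(h)=0$ and $Ric'(h)=-\tfrac12\triangle h$ follow from Proposition \ref{prop2.2} with flatness and $\delta_g h=0=\tr_g h$, and the identity $\int|Rm'(h)|^2=\int|\nabla^2h|^2=\int|\triangle h|^2$ does hold because every cross term, after one integration by parts and commuting derivatives (legitimate on a flat manifold), produces a factor of $\delta_g h$ (there are six cross terms rather than three, but they pair up by index relabeling, so this is only a bookkeeping remark). What your approach buys is a short, self-contained argument for the $\lambda=0$ theorem that avoids the lengthy term-by-term computations of Section 3.1 and makes the sign condition $s\gtrless-4$ transparent as positivity of a single $L^2$ coefficient; what it gives up is generality, since it says nothing about $\lambda\neq0$, which is where the paper's Lichnerowicz-Laplacian formulation and the eigenvalue bounds of Propositions \ref{LLposieigen}--\ref{negeigen} are actually needed. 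Your Gauss--Bonnet consistency check in dimension four ($s=-4$, $\tau=1$) is a nice confirmation and is consistent with the $\tau$-term dropping out on TT directions.
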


\begin{thm} Let $(M,g)$ be a compact manifold  with constant sectional curvature $\lambda=0$. Then the second variation of $\mathcal{F}_{s,\tau}$ at $(M,g)$ is nonnegative as $s+4\tau>\frac{4}{n}(\tau-1)$ and non-positive as $s+4\tau<\frac{4}{n}(\tau-1)$ when the variation is restricted in the conformal directions.
\end{thm}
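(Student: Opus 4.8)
\medskip
\noindent\textbf{Proof strategy.}
The plan is to reduce the computation to conformal variations of $\rho$ and $\mathcal{S}$ by exploiting that the Weyl tensor vanishes identically along conformal deformations of a flat metric. Write the variation as $h=fg$ with $f\in C^\infty(M)$; the conformal directions in $T_g\mathscr{M}_1$ are those with $\int_M f\,dV_g=0$, but this constraint will turn out to be irrelevant since the final expression depends only on $\nabla^2 f$. Because $g$ is flat we have $Rm=Ric=R\equiv 0$; hence each of $\mathcal{R},\rho,\mathcal{S}$, being of the form $\int_M|T|^2\,dV$ with $T(g)=0$, has vanishing first variation at $g$ on all of $\mathscr{M}$, and so does $\mathcal{F}_{s,\tau}$. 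Since $g$ is thus a critical point, its Hessian is curve-independent and agrees whether computed on $\mathscr{M}$ or on $\mathscr{M}_1$; I may therefore evaluate the second variation along the convenient curve $g_t=(1+tf)g$, which has $\dot g_0=fg$, and need not worry about volume normalization. Moreover, under $g\mapsto e^{2u}g$ the Weyl $(0,4)$-tensor merely rescales, so $W\equiv 0$ along the entire conformal family through $g$; by \eqref{decomposition1} this gives, identically on the conformal class of $g$,
\[
\mathcal{F}_{s,\tau}=\Big(s+\tfrac{4}{n-2}\Big)\rho+\Big(\tau-\tfrac{2}{(n-1)(n-2)}\Big)\mathcal{S}.
\]

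Next I would use the elementary fact that if $T$ vanishes at $g$ then the second variation of $\int_M|T|^2\,dV$ at $g$ equals $2\int_M|T'(h)|^2\,dV_g$, where $T'(h)$ is the linearization of $T$ in the direction $h$ (every other term produced by differentiating twice carries a factor of $T$). So the conformal Hessian of $\mathcal{F}_{s,\tau}$ is determined by the conformal linearizations $DRic(fg)$ and $DR(fg)$ at the flat metric, which I would obtain by differentiating the standard conformal transformation laws (equivalently, by specializing the general first-variation formulas for $Ric$ and $R$ to $h=fg$, $Ric\equiv 0$): on the flat background,
\[
DR(fg)=-(n-1)\,\triangle f,\qquad DRic(fg)=-\tfrac{n-2}{2}\,\nabla^2 f-\tfrac12(\triangle f)\,g,
\]
where $\triangle=\tr_g\nabla^2$ is the rough Laplacian.

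I would then assemble the pieces. On a flat compact manifold covariant derivatives commute, so integrating by parts twice gives $\int_M(\triangle f)^2\,dV_g=\int_M|\nabla^2 f|^2\,dV_g$; using also $\langle\nabla^2 f,g\rangle=\triangle f$ and $|g|^2=n$ one gets
\[
\frac{d^2}{dt^2}\Big|_{t=0}\mathcal{S}[(1+tf)g]=2(n-1)^2\!\int_M|\nabla^2 f|^2\,dV_g,\qquad
\frac{d^2}{dt^2}\Big|_{t=0}\rho[(1+tf)g]=\frac{n(n-1)}{2}\!\int_M|\nabla^2 f|^2\,dV_g.
\]
Substituting into the displayed linear combination and collecting terms,
\[
\frac{d^2}{dt^2}\Big|_{t=0}\mathcal{F}_{s,\tau}[(1+tf)g]=2(n-1)\Big(1+\tfrac{n}{4}s+(n-1)\tau\Big)\int_M|\nabla^2 f|^2\,dV_g.
\]
Since $\int_M|\nabla^2 f|^2\,dV_g\ge 0$, the sign is that of $1+\tfrac{n}{4}s+(n-1)\tau$; multiplying by $4$ and rearranging, $1+\tfrac{n}{4}s+(n-1)\tau>0\iff s+4\tau>\tfrac{4}{n}(\tau-1)$, with the reverse strict inequality (resp.\ equality) giving a non-positive (resp.\ identically vanishing) Hessian --- which is exactly the theorem.

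The structural steps --- the $W\equiv 0$ reduction, the identity for the Hessian of $\int|T|^2$ at a zero of $T$, and the irrelevance of the $\mathscr{M}_1$-normalization (because $g$ is critical on $\mathscr{M}$ with $\mathcal{F}_{s,\tau}[g]=0$) --- are essentially immediate. The part needing genuine care is the bookkeeping: fixing the constants and the Laplacian sign in the conformal linearizations of $Ric$ and $R$ (a slip there shifts the threshold), and correctly reassembling the second variation of $\rho$ out of $|DRic(fg)|^2$ via the integration-by-parts identity. I expect this last, purely computational step to be the main place where an error could slip in.
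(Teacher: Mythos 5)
Your proposal is correct, and it reaches exactly the paper's formula \eqref{CF0}: your coefficient $2(n-1)\big(1+\tfrac{n}{4}s+(n-1)\tau\big)=\tfrac12(n-1)\big(ns+4(n-1)\tau+4\big)$ and $\int_M|\nabla^2f|^2dV_g=\int_M(\triangle f)^2dV_g=\int_M f\triangle^2f\,dV_g$ on a flat compact manifold, so the sign threshold $s+4\tau>\tfrac4n(\tau-1)$ comes out the same. The route, however, is genuinely different from the paper's. The paper differentiates the gradient $G_{ij}$ of \eqref{Gij} term by term in the conformal direction $h=fg$ at a general constant-curvature metric (Proposition \ref{curvaturederivative} plus the list of integral identities), obtains the full formula \eqref{CFvariation} valid for all $\lambda$, and only then sets $\lambda=0$; the volume/Lagrange-multiplier issue is handled by \eqref{secondvariation}, where in the flat case $c=0$ — which is also the precise justification behind your remark that the $\mathscr{M}_1$-normalization is irrelevant here. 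You instead exploit flatness from the outset: $W\equiv0$ on the whole conformal class (identically if $n=3$, by conformal covariance of $W$ if $n\ge4$), so along conformal curves $\mathcal{F}_{s,\tau}=\big(s+\tfrac{4}{n-2}\big)\rho+\big(\tau-\tfrac{2}{(n-1)(n-2)}\big)\mathcal{S}$; since all curvature vanishes at $g$, the metric is critical on all of $\mathscr{M}$ and the Hessian of each $\int|T|^2dV$ is just $2\int|T'(fg)|^2$, so only the conformal linearizations $DR(fg)=-(n-1)\triangle f$ and $DRic(fg)=-\tfrac{n-2}{2}\nabla^2f-\tfrac12(\triangle f)g$ are needed (both agree with Propositions \ref{prop2.2} and \ref{curvaturederivative}), and your evaluations $\tfrac{d^2}{dt^2}\mathcal{S}=2(n-1)^2\int|\nabla^2f|^2$, $\tfrac{d^2}{dt^2}\rho=\tfrac{n(n-1)}{2}\int|\nabla^2f|^2$ check out. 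What each approach buys: yours is shorter, more structural, and less error-prone for the flat case, since it bypasses the long term-by-term bookkeeping; the paper's heavier computation is not wasted, though, because the general-$\lambda$ formula \eqref{CFvariation} is what drives Theorems 1.5 and 1.6, and the flat statement falls out of it as the special case $\lambda=0$.
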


We denote $\mathcal{M}_1([g])$  the space of unit volume metrics conformal to $g$.

\begin{thm} Let $(M,g)$ be an $n$-dimensional compact manifold with constant sectional curvature $\lambda=1$.  Then the following hold:
\begin{itemize}
\item[(1)] If $n=4$, $s+3\tau>-1$, then $\mathcal{F}_{s,\tau}$ attains a local minimizer at $g$  in $\mathcal{M}_1([g])$. If $s+3\tau<-1$, then $g$ is a local maximizer in $\mathcal{M}_1([g])$.
\item[(2)] If $n=3$, $\tau<1$ and  $s>-\frac{8}{3}\tau-\frac{4}{3}$, or $\tau>1$ and
$s>-\frac{12}{5}\tau-\frac{8}{5}$, then $\mathcal{F}_{s,\tau}$ attains a local minimizer at $g$  in $\mathcal{M}_1([g])$. If if $\tau<1$ and $s<-\frac{12}{5}\tau-\frac{8}{5}$, or $\tau>1$ and
$s<-\frac{8}{3}\tau-\frac{4}{3}$, then $g$ is a local maximizer in $\mathcal{M}_1([g])$.
\item[(3)] When $n\geq 5$, then $\mathcal{F}_{s,\tau}$ attains a local minimizer at $g$  in $\mathcal{M}_1([g])$ if
\begin{equation}
\left\{
\begin{array}{lr}
\tau>\frac{2}{(n-1)(n-2)} &\\
s>-\frac{4(n-1)}{n}\tau-\frac{4}{n}&
\end{array}
\right.
\quad or\quad
\left\{
\begin{array}{lr}
\tau<\frac{2}{(n-1)(n-2)}&\\
s>-\frac{2n(n-1)}{3n-4}\tau-\frac{8}{3n-4}.&
\end{array}
\right.
\end{equation}
and $g$ is a local maximizer in $\mathcal{M}_1([g])$ if
\begin{equation}
\left\{
\begin{array}{lr}
\tau>\frac{2}{(n-1)(n-2)}&\\
s<-\frac{2n(n-1)}{3n-4}\tau-\frac{8}{3n-4}
\end{array}
\right.
\quad or\quad
\left\{
\begin{array}{lr}
\tau<\frac{2}{(n-1)(n-2)}&\\
s<-\frac{4(n-1)}{n}\tau-\frac{4}{n}.&
\end{array}
\right.
\end{equation}
\end{itemize}
\end{thm}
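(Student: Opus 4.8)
The plan is: restrict to the conformal class, where $\mathcal{F}_{s,\tau}$ collapses to a linear combination of $\rho$ and $\mathcal{S}$; compute the Hessians of $\rho$ and $\mathcal{S}$ at $g$ in conformal directions; diagonalize them by the Laplace spectrum; and then read off the conditions from the Lichnerowicz--Obata bound $\lambda_1\ge n$.

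First I would parametrize a neighbourhood of $g$ in $\mathcal{M}_1([g])$ by $g_t=e^{t\phi}g$, so that the admissible infinitesimal deformations are $h=\phi g$ with $\int_M\phi\,dV_g=0$. Since $(M,g)$ has constant curvature it is locally conformally flat, a property preserved under conformal change, so $W_{\bar g}\equiv0$ for every $\bar g\in[g]$; hence, by \eqref{decomposition1},
\[
\mathcal{F}_{s,\tau}\big|_{\mathcal{M}_1([g])}=a\,\rho+b\,\mathcal{S},\qquad a=s+\frac{4}{n-2},\quad b=\tau-\frac{2}{(n-1)(n-2)}.
\]
For $n=4$ I would in addition invoke the Chern--Gauss--Bonnet formula, which together with $W_{\bar g}\equiv0$ forces $\rho=\tfrac13\mathcal{S}+\text{const}$ on $[g]$, whence $\mathcal{F}_{s,\tau}|_{[g]}=\tfrac{s+3\tau+1}{3}\,\mathcal{S}+\text{const}$; since the computation below shows that $\mathcal{S}$ has non-negative second variation in these directions, part (1) follows at once.

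Next, both $\rho$ and $\mathcal{S}$ are critical for their restrictions to $\mathcal{M}_1$ at $g$ (immediate from their Euler--Lagrange equations, all curvature quantities being parallel at constant curvature), so their Hessians there are well defined; I would compute them in the direction $h=\phi g$ using the exact conformal transformation laws for $Ric$ and $R$ along $g_t=e^{t\phi}g$, expanding to second order, integrating by parts, applying the Bochner identity $\int_M|\nabla^2\phi|^2=\int_M(\triangle\phi)^2-(n-1)\int_M|\nabla\phi|^2$ (valid because $Ric=(n-1)g$), and adding the correction term proportional to $\int_M\phi^2\,dV_g$ that the unit-volume constraint produces, which stems from the homogeneity $\mathcal{F}[\lambda g]=\lambda^{(n-4)/2}\mathcal{F}[g]$ of $\mathcal{R},\rho,\mathcal{S}$. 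This yields explicit quadratic forms in $\int_M(\triangle\phi)^2$, $\int_M|\nabla\phi|^2$, $\int_M\phi^2$.

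Decomposing $\phi=\sum_k\phi_k$ into Laplace eigenfunctions, $-\triangle\phi_k=\mu_k\phi_k$ with $\mu_k>0$ (the mean-zero condition removes $\mu=0$), on the $\mu$-eigenspace the second variation of $\mathcal{F}_{s,\tau}|_{[g]}$ equals $Q(\mu)\int_M\phi^2\,dV_g$, where
\[
Q(\mu)=2(n-1)(\mu-n)\,L(\mu),\qquad L(\mu)=\big[na+4(n-1)b\big]\mu+2(n-1)(n-4)(a+nb),
\]
$L$ being affine in $\mu$ and $Q(n)=0$; the latter is consistent with the diffeomorphism invariance of $\mathcal{F}_{s,\tau}$, since on $S^n$ the $\mu=n$ conformal directions are precisely those tangent to the conformal-group orbit of $g$. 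By Lichnerowicz--Obata, $\mu_k\ge n$ for every nonzero mode, so $\mu-n\ge0$ there and the sign of the second variation is that of $L$ on $[n,\infty)$. An affine function is $\ge0$ throughout $[n,\infty)$ exactly when its slope $na+4(n-1)b$ and its value $L(n)=(n-2)\big[(3n-4)a+2n(n-1)b\big]$ are both $\ge0$; since $n-2>0$, substituting $a,b$ in terms of $s,\tau$ and splitting according to $\operatorname{sign}(b)=\operatorname{sign}\!\big(\tau-\tfrac{2}{(n-1)(n-2)}\big)$ --- which, because of the sign of $(n-4)$, governs which of the two inequalities is binding (the roles being reversed between $n=3$ and $n\ge5$, and coinciding at $n=4$) --- reproduces all three cases of the statement; the local-maximum case is the mirror image ($L\le0$ on $[n,\infty)$).

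The step I expect to be the main obstacle is the Hessian computation for $\rho$, since $Ric$ is not conformally covariant and one must carry along the Hessian of $\phi$, its contraction with the background curvature and the terms quadratic in $\nabla\phi$, and then combine them correctly with the volume-constraint correction; a secondary difficulty is verifying that the pair of linear inequalities really does collapse into the stated disjunction for every $n\ge3$.
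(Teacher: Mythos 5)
Your proposal is correct in substance and lands on exactly the paper's quadratic form, but it gets there by a partly different route. The paper computes the second variation of $\mathcal{F}_{s,\tau}$ directly from its gradient $G_{ij}$, linearizing every curvature term (including the full $|Rm|^2$ contribution) at the constant-curvature metric in the direction $h=fg$, arriving at \eqref{CF1} and the polynomial $P_1(\mu)$ in \eqref{CFP}, and then does the case analysis $n=4,3,\ge 5$ by hand. You instead exploit that $W_g=0$ and the conformal invariance of the Weyl tensor to write $\mathcal{F}_{s,\tau}\big|_{\mathcal{M}_1([g])}=a\rho+b\mathcal{S}$ exactly, with $a=s+\tfrac{4}{n-2}$, $b=\tau-\tfrac{2}{(n-1)(n-2)}$, so only the Hessians of $\rho$ and $\mathcal{S}$ in conformal directions are needed, and you dispatch $n=4$ outright via Chern--Gauss--Bonnet ($\rho=\tfrac13\mathcal{S}+\mathrm{const}$ on $[g]$, giving $\mathcal{F}|_{[g]}=\tfrac{s+3\tau+1}{3}\mathcal{S}+\mathrm{const}$), which is slicker than the paper's specialization \eqref{CFP4}. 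The spectral endgame is the same as the paper's: decomposition into Laplace eigenfunctions, the Lichnerowicz--Obata bound $\mu\ge n$ (the paper's Proposition \ref{CFeigen}), and the sign of the affine factor on $[n,\infty)$. Your stated factor checks out: $na+4(n-1)b=ns+4(n-1)\tau+4$ and $2(n-1)(n-4)(a+nb)=2(n-4)\big((n-1)s+n(n-1)\tau+2\big)$ agree, up to an overall positive constant, with the linear factor in \eqref{CFP}, and your criterion ``slope $\ge0$ and $L(n)\ge0$'' with $L(n)=(n-2)\big[(3n-4)a+2n(n-1)b\big]$ reproduces precisely the two families of lines $s=-\tfrac{4(n-1)}{n}\tau-\tfrac4n$ and $s=-\tfrac{2n(n-1)}{3n-4}\tau-\tfrac{8}{3n-4}$, which cross at $\tau=\tfrac{2}{(n-1)(n-2)}$; this explains cleanly why the binding inequality switches there, recovering all three cases (and the $n=3$ lines $s=-\tfrac83\tau-\tfrac43$, $s=-\tfrac{12}{5}\tau-\tfrac85$ crossing at $\tau=1$). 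Two caveats: the Hessian computation for $\rho$ and $\mathcal{S}$ is only set up, not carried out, though the tools you list (conformal transformation laws, the Bochner identity with $Ric=(n-1)g$, the volume-constraint/homogeneity correction, which in the paper appears as the multiplier term in \eqref{secondvariation} with $2nc=(n-4)(|Rm|^2+s|Ric|^2+\tau R^2)$) are exactly what is needed and your asserted outcome matches \eqref{CF1}; and, like the paper, you pass from non-negativity of a second variation with a kernel at $\mu=n$ (on the round sphere) to the statement ``local minimizer,'' which is a shared gap of the paper's argument rather than a defect specific to your approach.
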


\begin{thm} Let $(M,g)$ be an $n$-dimensional compact manifold with constant sectional curvature $\lambda=-1$,  the following hold:

\quad (1) If $n=4$, $s+3\tau>-1$, Then $\mathcal{F}_{s,\tau}$ attains a local minimizer at $g$ in $\mathcal{M}_1([g])$. If $s+3\tau<-1$, then $g$ is a local maximizer in $\mathcal{M}_1([g])$.

\quad (2) If $n=3$, $\tau<1$ and  $s>-\frac{8}{3}\tau-\frac{4}{3}$, or $\tau>1$ and
$s>-\frac{12}{5}\tau-\frac{8}{5}$, then $\mathcal{F}_{s,\tau}$ attains a local minimizer at $g$  in $\mathcal{M}_1([g])$. If  $\tau<1$ and $s<-\frac{12}{5}\tau-\frac{8}{5}$, or $\tau>1$ and
$s<-\frac{8}{3}\tau-\frac{4}{3}$, then $g$ is a local maximizer in $\mathcal{M}_1([g])$.

\quad (3) When $n\geq 5$, then $\mathcal{F}_{s,\tau}$ attains a local minimizer at $g$ in $\mathcal{M}_1([g])$ if
\begin{equation}
\left\{
\begin{array}{lr}
\tau<\frac{2}{(n-1)(n-2)} &\\
-\frac{4(n-1)}{n}\tau-\frac{4}{n}<s<-n\tau-\frac{2}{n-1}.&
\end{array}
\right.
\end{equation}
and $g$ is a local maximizer in $\mathcal{M}_1([g])$ if
\begin{equation}
\left\{
\begin{array}{lr}
\tau<\frac{2}{(n-1)(n-2)}&\\
s>-n\tau-\frac{2}{n-1}
\end{array}
\right.
\quad or\quad
\left\{
\begin{array}{lr}
\tau>\frac{2}{(n-1)(n-2)}&\\
s>-\frac{4(n-1)}{n}\tau-\frac{4}{n}.&
\end{array}
\right.
\end{equation}
\end{thm}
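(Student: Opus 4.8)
Set $a=\tfrac{4}{n-2}+s$ and $b=\tau-\tfrac{2}{(n-1)(n-2)}$; by \eqref{decomposition1}, $\mathcal{F}_{s,\tau}=\mathcal{W}+a\,\rho+b\,\mathcal{S}$ (in $n=3$, $\mathcal{W}\equiv0$ identically). A space form is conformally flat and conformal flatness is conformally invariant, so every metric in $\mathcal{M}_1([g])$ again has vanishing Weyl tensor; hence $\mathcal{W}$ is constant ($\equiv0$) along conformal deformations of $g$ and contributes nothing to the second variation. By Corollary~2.2, $g$ is a critical point of $\mathcal{F}_{s,\tau}$ on $\mathscr{M}_1$, so --- writing a conformal direction as $h=\varphi g$, which lies in $T_g\mathscr{M}_1$ exactly when $\int_M\varphi\,dV_g=0$ --- whether $g$ is a local minimizer or maximizer in $\mathcal{M}_1([g])$ is decided by the sign of
\[
\mathcal{Q}_{s,\tau}(\varphi)\;=\;a\,\rho''(\varphi g,\varphi g)\;+\;b\,\mathcal{S}''(\varphi g,\varphi g).
\]

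The next step is to compute $\rho''$ and $\mathcal{S}''$ at $(M,g)$ with $\lambda=-1$ on conformal directions. With $\delta g=2\varphi g$ one has $Ric'=-(n-2)\nabla^2\varphi-(\Delta\varphi)g$ and $R'=-2(n-1)(\Delta\varphi+n\lambda\varphi)$, and then the second variations of $\rho$ and $\mathcal{S}$ (keeping the $\delta^2(dV_g)$ terms and the unit-volume normalization, which enters only through the first variation of $\mathcal{F}_{s,\tau}$ in the constant conformal direction and hence produces only a $\varphi^2$-term). Using the Bochner identity $\int_M|\nabla^2\varphi|^2=\int_M(\Delta\varphi)^2-(n-1)\lambda\int_M|\nabla\varphi|^2$ and integration by parts reduces everything to $\int_M(\Delta\varphi)^2$, $\int_M|\nabla\varphi|^2$, $\int_M\varphi^2$, giving
\[
\mathcal{Q}_{s,\tau}(\varphi)=\int_M\Big(P\,(\Delta\varphi)^2+E\,|\nabla\varphi|^2+S\,\varphi^2\Big)\,dV_g ,
\]
with $P,E,S$ affine in $(s,\tau)$. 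The top-order coefficient $P$ is curvature-independent and, as in the flat computation behind Theorem~1.4, is a positive multiple of $ns+4(n-1)\tau+4$; the coefficients $E,S$ genuinely involve $\lambda=-1$. When $n=4$ the conformal invariance of $\int_M(|Ric|^2-\tfrac13R^2)\,dV_g$ forces $\rho''=\tfrac13\mathcal{S}''$ on conformal directions, so $\mathcal{Q}_{s,\tau}$ becomes a scalar multiple of $\mathcal{S}''$ and only the single inequality $s+3\tau+1\gtrless 0$ survives; when $n=3$ one uses $\mathcal{R}=4\rho-\mathcal{S}$ in place of \eqref{decomposition1}.

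The heart of the argument is spectral. Expanding $\varphi=\sum_k\varphi_k$ in $L^2$-orthogonal eigenfunctions $-\Delta\varphi_k=\mu_k\varphi_k$ with $\mu_k>0$ (no zero mode, by $\int_M\varphi\,dV_g=0$), one gets $\mathcal{Q}_{s,\tau}(\varphi)=\sum_k p(\mu_k)\,\|\varphi_k\|_{L^2}^2$ with $p(\mu)=P\mu^2+E\mu+S$, so $\mathcal{Q}_{s,\tau}\ge 0$ (resp. $\le 0$) iff $p\ge 0$ (resp. $\le 0$) at every eigenvalue of $-\Delta$. Since compact hyperbolic $n$-manifolds realize arbitrarily small first eigenvalue, a statement valid for all such $(M,g)$ requires $p(\mu)\ge 0$ for every $\mu>0$ --- in sharp contrast to the sphere case of Theorem~1.5, where the bound $\mu_1\ge n$ lets one demand only $p\ge 0$ on $[n,\infty)$ (so the $\lambda=1$ ranges are wider), and with no analogue available of the uniform TT-eigenvalue estimates of Propositions~1.1--1.2. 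For a real quadratic, $p\ge 0$ on $(0,\infty)$ exactly when $P\ge 0$, $S\ge 0$, and ($P=0$, or $E\ge 0$, or $E^2\le 4PS$); and $p\le 0$ on $(0,\infty)$ exactly when the same conditions hold with all inequalities reversed. Unwinding these against the explicit affine $P(s,\tau),E(s,\tau),S(s,\tau)$, with the dichotomy organized by $\operatorname{sgn}(b)=\operatorname{sgn}\big(\tau-\tfrac{2}{(n-1)(n-2)}\big)$ (which also decides when the admissible $s$-interval in part~(3) is nonempty), yields the three lists in the statement; the cases $n=3$, $n=4$, $n\ge5$ differ only in the form taken by $E,S$ and, for $n=4$, the collapse $\rho''=\tfrac13\mathcal{S}''$.

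The main obstacle I anticipate is the bookkeeping feeding the spectral step: obtaining the lower-order coefficients $E,S$ correctly --- summing the several contributions to $\rho''$ and $\mathcal{S}''$ (the $|Ric'|^2$ and $\langle Ric,Ric''\rangle$ pieces, the volume-element terms, the normalization term) with consistent sign conventions for $\Delta$ and $\delta_g$ --- and then solving the sign conditions on $p$ and matching them to the piecewise inequalities, including the boundary regimes $E=0$ or $S=0$. The one conceptual pitfall to avoid is that, because $\lambda=-1$ carries no uniform positive spectral gap on functions, the argument genuinely needs $p\ge 0$ on all of $(0,\infty)$, not merely at a spectrum bounded away from $0$; it is exactly this that replaces a half-line of admissible $(s,\tau)$ by the bounded $s$-window appearing in part~(3).
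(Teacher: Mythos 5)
Your plan is, at its heart, the same argument the paper runs for this theorem: at a space form the conformal second variation of $\mathcal{F}_{s,\tau}$ is a quadratic form diagonalized by Laplace eigenfunctions, and since for $\lambda=-1$ there is no uniform positive lower bound on the nonzero spectrum (unlike the bound $\mu_1\ge n$ used via Proposition 3.3 in the $\lambda=1$ case), one must ask the spectral polynomial to be signed on all of $(0,\infty)$, which is precisely what produces the bounded $s$-window in part (3); the paper does exactly this with its explicit second-variation formula (3.18) and the polynomial $P_2(\mu)$ of (3.19). Where you genuinely differ is the route to the formula: the paper linearizes its gradient $G_{ij}$ term by term (Proposition 3.2 and the table following it, culminating in (3.12)), whereas you write $\mathcal{F}_{s,\tau}=\mathcal{W}+\big(\tfrac{4}{n-2}+s\big)\rho+\big(\tau-\tfrac{2}{(n-1)(n-2)}\big)\mathcal{S}$, discard $\mathcal{W}$ because conformal flatness persists in the conformal class, and in $n=4$ collapse $\rho''$ to $\tfrac13\mathcal{S}''$ via the conformal invariance of $\int(|Ric|^2-\tfrac13 R^2)\,dV_g$; the paper gets that same $n=4$ collapse for free from the factor $(n-4)$ in (3.12). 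Your organization is conceptually cleaner but does not avoid the same lower-order bookkeeping the paper tabulates, and two points need care when you execute it. First, your criterion for $p\ge 0$ on $(0,\infty)$ is off in the edge case $P=0$, $E<0$ (then $p$ is eventually negative even if $S\ge 0$); this is harmless here because the relevant quadratic factors as $(n-1)(\mu+n)\,\ell(\mu)$ with $\ell$ linear, so the correct condition is simply that the slope and the value at $\mu=0$ of $\ell$ carry the required sign, which is the form the paper actually uses in (3.19). Second, do not take on faith that the unwinding ``yields the three lists in the statement'': running the paper's own $P_2$-analysis reproduces the minimizer window of part (3) for $n\ge 5$, but for the maximizer branch (where the printed inequalities would force the leading coefficient positive, hence positivity at large eigenvalues) and for $n=3$ (where the printed list coincides with the $\lambda=1$ conditions, while the paper's proof text lists yet different ones) the stated conditions do not match what the computation gives, so your final matching step must be carried out and checked rather than asserted.
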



\section{The first variation and Euler-Lagrange equation}

 Suppose $(M,g)$ be a $n$-dimensional Riemannian manifold.  We choose a local orthonormal frame $\left\{e_1, e_2, \ldots e_n\right\}$, in accordance with the dual coframe $\left\{\omega^1, \omega^2, \ldots \omega^n\right\}$. Throughout this paper, we always adopt the moving frame notation with respect to a chosen local orthonormal frame, and also the Einstein summation convention. Let $g\in \mathscr{M}$ be an arbitrary fixed metric and $g=g_{ij}\omega^i\otimes\omega^j$, $g^{ij}=(g_{ij})^{-1}$. Given a tensor, we raise  or lower an index by contracting the tensor with the metric tensor $g$.

We denote $\nabla$ as the covariant derivative, and write $R_{ij,k}=\nabla_kR_{ij}$, $R_{ij,kl}=\nabla_l\nabla_kR_{ij}$, the Laplacian $\triangle R_{ij}=g^{kl}R_{ij,kl}$ and so on. For any $(0,2)$ tensor $S$, the Ricci identities can be expressed as 
\[
S_{ij,kl}-S_{ij,lk}=S_{pj}R_{pikl}+S_{ip}R_{pjkl}.
\]

Let $g(t)\in\mathscr{M}_1$ be a smooth variation of $g$ with $g(0)=g$ which can be represented locally as $g_{ij}(x^1,\ldots,x^n;t)$.  We define a tensor field $h\in S^2(M)$ with  $h := \frac{d}{dt}g(t)$. For convenience, we write $(\cdot)'$ to stand for
$\frac{d}{dt}$. Then we have the following formulae
 \begin{equation}
 (g_{ij})'=h_{ij}, \ \ \   \ (g^{ij})'=-h^{ij}. \label{derivative}
 \end{equation}
 
\begin{prop} Let $g(t)\in\mathscr{M}_1$ is a smooth variation \eqref{derivative}, then 
\begin{equation}\label{2.2}
\int_M \tr_{g(t)}hdV_g=0,\quad
\int_M \left(g^{ij}\frac{d^2}{dt^2}g_{ij}-|h|^2+\frac{1}{2}H^2\right)dV_g=0,
\end{equation}
where $|h|^2=h^{ij}h_{ij}$, $H=\tr_{g(t)}h=g^{ij}h_{ij}$.
\end{prop}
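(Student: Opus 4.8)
The plan is to derive both identities by differentiating the unit–volume constraint $vol(g(t))=\int_M dV_{g(t)}\equiv 1$ in $t$. First I would record the first variation of the volume element: Jacobi's formula gives $\big(\det(g_{ij}(t))\big)'=\det(g_{ij}(t))\,g^{ij}(t)(g_{ij}(t))'$, so by \eqref{derivative}
\[
\big(dV_{g(t)}\big)'=\tfrac12\,g^{ij}(t)h_{ij}\,dV_{g(t)}=\tfrac12\,H\,dV_{g(t)},\qquad H=\tr_{g(t)}h .
\]
Differentiating the constraint once then gives $0=\int_M\tfrac12 H\,dV_{g(t)}$, which is the first identity (throughout, the volume form is that of $g(t)$; specializing at $t=0$ recovers the form $dV_g$ used later). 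In particular this records that $h(t)\in T_{g(t)}\mathscr{M}_1$ for every $t$.

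For the second identity I would differentiate once more. Using the first–variation formula again,
\[
\big(H\,dV_{g(t)}\big)'=\big(H'+\tfrac12 H^2\big)\,dV_{g(t)},
\]
while differentiating $H=g^{ij}(t)(g_{ij}(t))'$ and invoking \eqref{derivative} a second time gives
\[
H'=(g^{ij})'(g_{ij})'+g^{ij}(g_{ij})''=-h^{ij}h_{ij}+g^{ij}\frac{d^2}{dt^2}g_{ij}=-|h|^2+g^{ij}\frac{d^2}{dt^2}g_{ij}.
\]
Substituting this into $0=\big(vol(g(t))\big)''=\int_M\big(H'+\tfrac12 H^2\big)\,dV_{g(t)}$ and multiplying by $2$ yields exactly
\[
\int_M\Big(g^{ij}\frac{d^2}{dt^2}g_{ij}-|h|^2+\tfrac12 H^2\Big)\,dV_{g(t)}=0 .
\]

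I do not expect any genuine obstacle here; this is a short computation once the first–variation formula $\big(dV_{g(t)}\big)'=\tfrac12 H\,dV_{g(t)}$ is in hand. The only points needing care are the bookkeeping of which metric the trace and the volume form refer to (everything at $g(t)$) and the sign in $(g^{ij})'=-h^{ij}$ from \eqref{derivative}: it is this sign that produces the $-|h|^2$ term, while the $\tfrac12 H^2$ term arises from the volume–form factor that appears when $H\,dV_{g(t)}$ is differentiated a second time.
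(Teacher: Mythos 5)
Your proposal is correct and follows essentially the same route as the paper: differentiate the constraint $\int_M dV_{g(t)}=1$ once, using $(dV_{g(t)})'=\tfrac12 H\,dV_{g(t)}$, to get the first identity, and differentiate again, using $(g^{ij})'=-h^{ij}$, to get the second. The only difference is that you write out the second differentiation (the terms $H'=-|h|^2+g^{ij}\frac{d^2}{dt^2}g_{ij}$ and the extra $\tfrac12H^2$ from the volume form) explicitly, which the paper leaves implicit.
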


\begin{proof}
Recall $g(t)\in\mathscr{M}_1$, we have $\int_M dV_g=1$, then we have 
\[
0=\frac{d}{dt}(\int_M dV_g)=\int_M \frac{d}{dt}(dV_g)=\int_M \frac{1}{2}g^{ij}(g_{ij})'dV_g=\frac{1}{2}\int_M \tr_{g(t)}hdV_g.
\]
Differentiating the above equality, we get
\[
\int_M \left(g^{ij}\frac{d^2}{dt^2}g_{ij}-|h|^2+\frac{1}{2}(\tr_{g(t)}h)^2\right)dV_g=0.
\]
This proves \eqref{2.2}.
\end{proof}

From \eqref{derivative}, we get the variation of the Christoffel symbols£¬
\begin{equation}
\frac{d}{dt}\Gamma^{^k}_{ij}=\frac{1}{2}g^{kl}(h_{il,j}+h_{jl,i}-h_{ij,l}).\label{Christoffel}
\end{equation}
By use of \eqref{Christoffel}, we can get the following variational formulae of Riemannaian curvature tensor, Ricci curvature tensor and scalar curvature directly. 
\begin{prop}The variations of Riemannaian curvature tensor, Ricci curvature tensor and scalar curvature are expressed as 
 \begin{flalign*}
  (R^{l}_{\ ijk})'&=\frac{1}{2}g^{pl}\Big(h_{ip,kj}+h_{kp,ij}-h_{ik,pj}-h_{ip,jk}-h_{jp,ik}+h_{ij,pk}\Big), \\
   (R_{lijk})'&=h_{lq}R^{q}_{\ ijk}+\frac{1}{2}\Big(h_{il,kj}+h_{kl,ij}-h_{ik,lj}-h_{il,jk}-h_{jl,ik}+h_{ij,lk}\Big), \\
 (R_{ik})'&=\frac{1}{2}\Big(h^j_{i,kj}+h^j_{k,ij}-\triangle h_{ik}-H_{,ik}\Big),   \\
    R'&=-h^{ij}R_{ij}+h^{ij}_{\ ,ij}-\triangle H.  \\
 \end{flalign*}\label{prop2.2}
\end{prop}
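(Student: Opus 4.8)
The plan is to derive all four formulae directly from the variation of the Christoffel symbols \eqref{Christoffel} by differentiating the defining expressions of the curvature tensors in $t$. The key observation is that, since the difference of two linear connections is a genuine tensor, $\Lambda^k_{ij} := \frac{d}{dt}\Gamma^k_{ij}$ is a $(1,2)$-tensor field, and differentiating the standard expression $R^l_{\ ijk} = \partial_j\Gamma^l_{ik} - \partial_k\Gamma^l_{ij} + \Gamma^l_{jm}\Gamma^m_{ik} - \Gamma^l_{km}\Gamma^m_{ij}$ in $t$ and reassembling the lower-order terms into covariant derivatives yields the Palatini-type identity $(R^l_{\ ijk})' = \nabla_j\Lambda^l_{ik} - \nabla_k\Lambda^l_{ij}$. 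First I would substitute $\Lambda^l_{ik} = \tfrac{1}{2}g^{pl}(h_{ip,k}+h_{kp,i}-h_{ik,p})$, use metric compatibility $\nabla g = 0$ to move $g^{pl}$ outside the covariant derivatives, and collect the six resulting terms; this gives the stated expression for $(R^l_{\ ijk})'$ at once.

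For the purely covariant tensor I would write $R_{lijk} = g_{lq}R^q_{\ ijk}$ and apply the Leibniz rule: the term $(g_{lq})' = h_{lq}$ contributes $h_{lq}R^q_{\ ijk}$, while $g_{lq}(R^q_{\ ijk})'$ is just the previous formula with its free index lowered, producing the second displayed identity. The Ricci tensor is then the contraction $R_{ik} = R^j_{\ ijk}$ — a contraction of an upper against a lower index, so no variation of the metric enters — and the scalar curvature is $R = g^{ik}R_{ik}$, where now $(g^{ik})' = -h^{ik}$ is responsible for the leading term $-h^{ij}R_{ij}$; the remaining contributions come from $g^{ik}(R_{ik})'$.

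The only genuine bookkeeping lies in these two contractions. Setting $l=j$ in $(R^j_{\ ijk})'$ and raising indices with $g$, two of the six terms, $g^{pj}h_{ip,jk}$ and $g^{pj}h_{ij,pk}$, coincide after relabelling the summation index and cancel, leaving $(R_{ik})' = \tfrac{1}{2}(h^j_{i,kj}+h^j_{k,ij}-\triangle h_{ik}-H_{,ik})$ with $\triangle = g^{pj}\nabla_p\nabla_j$ the rough Laplacian. Tracing once more, $g^{ik}h^j_{i,kj}$ and $g^{ik}h^j_{k,ij}$ both reduce to the double divergence $h^{ij}_{\ ,ij}$ by the symmetry of $h$, while $g^{ik}\triangle h_{ik} = g^{ik}H_{,ik} = \triangle H$, so $R' = -h^{ij}R_{ij} + h^{ij}_{\ ,ij} - \triangle H$. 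I expect no real obstacle here, only a point demanding care: keeping the curvature sign convention and index ordering fixed in Section 2 (the Ricci identity $S_{ij,kl}-S_{ij,lk}=S_{pj}R_{pikl}+S_{ip}R_{pjkl}$, and the choice of which slot of $R^l_{\ ijk}$ is contracted to form $Ric$) perfectly consistent throughout; these — rather than any commutation of covariant derivatives, which is not needed since the formulae are stated with the derivative order left as it arises — dictate the precise pattern of signs in the four identities.
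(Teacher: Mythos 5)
Your derivation is correct and is exactly the route the paper intends: the paper simply asserts that Proposition 2.2 follows ``directly'' from the variation of the Christoffel symbols \eqref{Christoffel}, which is the Palatini-type identity $(R^l_{\ ijk})'=\nabla_j\Lambda^l_{ik}-\nabla_k\Lambda^l_{ij}$ you use, followed by lowering an index and the two contractions. Your bookkeeping (the cancellation of $g^{pj}h_{ip,jk}$ against $g^{pj}h_{ij,pk}$ in the Ricci trace, and the final trace giving $h^{ij}_{\ ,ij}-\triangle H$) reproduces the stated formulae with the paper's conventions, so there is nothing to add.
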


Now, we compute the first variation of the quadratic curvature functional $\mathcal{F}_{s,\tau}$  restricting on Riemannian metrics space $\mathscr{M}_1$ and derive its Euler-Lagrange equation. At first, we compute the first variations of $\mathcal{R}$, $\rho$, and $\mathcal{S}$,  respectively.

By Proposition $\ref{prop2.2}$, we have
\begin{align*}
\frac{d}{dt}|Rm|^2&=\frac{d}{dt}\Big(g^{pl}g^{qi}g^{rj}g^{sk}R_{pqrs}R_{lijk}\Big)
\\&=-R_{p}^{\ ijk}R_{lijk}h^{pl}-R^{l\  jk}_{q}R_{lijk}h^{qi}-R_{\ r}^{li\ k}R_{lijk}h^{rj}-R_{\quad s}^{lij}R_{lijk}h^{sk}+2R^{lijk}\frac{d}{dt}R_{lijk}
\\&=-R_{p}^{\ ijk}R_{lijk}h^{pl}-R^{l\  jk}_{q}R_{lijk}h^{qi}-R_{\ r}^{li\ k}R_{lijk}h^{rj}-R_{\quad s}^{lij}R_{lijk}h^{sk}+2R^{lijk}h_{lq}R^{q}_{\ ijk}\\&\ \ \ \ +R^{lijk}\Big(h_{il,kj}+h_{kl,ij}-h_{ik,lj}-h_{il,jk}-h_{jl,ik}+h_{ij,lk}\Big)
\\&=-R_{p}^{\ ijk}R_{lijk}h^{pl}-R^{l\  jk}_{q}R_{lijk}h^{qi}-R_{\ r}^{li\ k}R_{lijk}h^{rj}-R_{\quad s}^{lij}R_{lijk}h^{sk}\\&\ \ \ \ +2R^{lijk}h_{lq}R^{q}_{\ ijk}+4R^{lijk}h_{ij,lk},  \\
\frac{d}{dt}|Ric|^2&=\frac{d}{dt}\Big(g^{pi}g^{qk}R_{pq}R_{ik}\Big)
\\&=-2h^{pi}R_{p}^{\ k}R_{ik}+2R^{ik}\frac{d}{dt}R_{ik}
\\&=-2h^{pi}R_{p}^{\ k}R_{ik}+R^{ik}\Big(h^j_{i,kj}+h^j_{k,ij}-\triangle h_{ik}-H_{,ik}\Big), \\
\frac{d}{dt}R^2&=2R\Big(-h^{ij}R_{ij}+h^{ij}_{\ ,ij}-\triangle H\Big).
\end{align*}
Integrating by parts, we get
\begin{align*}
\frac{d}{dt}\mathcal{R}&=\int_M \Big(\frac{d}{dt}|Rm|^2+\frac{1}{2}|Rm|^2H\Big)dV_g
\\&=\int_M \Big(-R_{p}^{\ ijk}R_{lijk}h^{pl}-R^{l\  jk}_{q}R_{lijk}h^{qi}-R_{\ r}^{li\ k}R_{lijk}h^{rj}-R_{\quad s}^{lij}R_{lijk}h^{sk}\\&\ \ \ \ +2R^{lijk}h_{lq}R^{q}_{\ ijk}+4R^{lijk}h_{ij,lk}+\frac{1}{2}|Rm|^2H\Big)dV_g
\\&=\int_M \Big(-2R_{i}^{\ plk}R_{jplk}+2R_{,ij}-4\triangle R_{ij}-4R^{pl}R_{ipjl}+4R_{jp}R_{i}^{p}+\frac{1}{2}|Rm|^2g_{ij}\Big)h^{ij}dV_g, \\
\frac{d}{dt}
\rho&=\int_M \Big(\frac{d}{dt}|Ric|^2+\frac{1}{2}|Ric|^2H\Big)dV_g
\\&=\int_M \Big(-2h^{pi}R_{p}^{\ k}R_{ik}+R^{ik}\big(h^j_{i,kj}+h^j_{k,ij}-\triangle h_{ik}-H_{,ik}\big)+\frac{1}{2}|Ric|^2H\Big)dV_g
\\&=\int_M \Big(-\triangle R_{ij}-2R^{pl}R_{ipjl}+R_{,ij}-\frac{1}{2}(\triangle R)g_{ij}+\frac{1}{2}|Ric|^2g_{ij}\Big)h^{ij}dV_g, \\
\frac{d}{dt}\mathcal{S}&=\int_M \Big(\frac{d}{dt}|R|^2+\frac{1}{2}|R|^2H\Big)dV_g
\\&=\int_M \Big(2R\big(-h^{ij}R_{ij}+h^{ij}_{\ ,ij}-\triangle H\big)+\frac{1}{2}|R|^2H\Big)dV_g
\\&=\int_M \Big(2R_{,ij}-2(\triangle R)g_{ij}-2RR_{ij}+\frac{1}{2}R^2g_{ij}\Big)h^{ij}dV_g.
\end{align*}

\begin{lm}(\cite{Besse}) The gradients of the functionals $\mathcal{R}$, $\rho$, $\mathcal{S}$, and $\mathcal{F}_{s,\tau}$ are given by the following equations.
\begin{align*}
(\nabla\mathcal{R})_{ij}&=-2R_{i}^{\ plk}R_{jplk}+2R_{,ij}-4\triangle R_{ij}-4R^{pl}R_{ipjl}+4R_{jp}R_{i}^{p}+\frac{1}{2}|Rm|^2g_{ij},\\
(\nabla\rho)_{ij}&=-\triangle R_{ij}-2R^{pl}R_{ipjl}+R_{,ij}-\frac{1}{2}(\triangle R)g_{ij}+\frac{1}{2}|Ric|^2g_{ij},\\
 (\nabla\mathcal{S})_{ij}&= 2R_{,ij}-2(\triangle R)g_{ij}-2RR_{ij}+\frac{1}{2}R^2g_{ij},\\
 (\nabla\mathcal{F}_{s,\tau})_{ij}&=-2R_{i}^{\ plk}R_{jplk}+2R_{,ij}-4\triangle R_{ij}-4R^{pl}R_{ipjl}+4R_{jp}R_{i}^{p}+\frac{1}{2}|Rm|^2g_{ij}\\&\ \ \ \ +s\Big(-\triangle R_{ij}-2R^{pl}R_{ipjl}+R_{,ij}-\frac{1}{2}(\triangle R)g_{ij}+\frac{1}{2}|Ric|^2g_{ij}\Big)\\&\ \ \ \ +\tau\Big(2R_{,ij}-2(\triangle R)g_{ij}-2RR_{ij}+\frac{1}{2}R^2g_{ij}\Big).\tag{2.5}\label{F-gradient}
 \end{align*}
\end{lm}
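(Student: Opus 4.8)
The plan is to derive each gradient from the defining relation for the $L^2$-gradient of a Riemannian functional: $\nabla F$ is the symmetric $2$-tensor characterized by $\frac{d}{dt}\big|_{t=0}F(g(t))=\int_M(\nabla F)_{ij}h^{ij}\,dV_g$ for every variation $h=g'(0)\in S^2(M)$. Since $\mathcal{F}_{s,\tau}=\mathcal{R}+s\rho+\tau\mathcal{S}$ and the first variation depends linearly on the functional, it suffices to establish the formulas for $\nabla\mathcal{R}$, $\nabla\rho$, $\nabla\mathcal{S}$ separately and then set $\nabla\mathcal{F}_{s,\tau}=\nabla\mathcal{R}+s\,\nabla\rho+\tau\,\nabla\mathcal{S}$. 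Thus the whole lemma (a classical computation going back to Besse) reduces to putting each of $\frac{d}{dt}\mathcal{R}$, $\frac{d}{dt}\rho$, $\frac{d}{dt}\mathcal{S}$ into the canonical form $\int_M(\cdots)_{ij}h^{ij}\,dV_g$ and reading off the bracketed tensor.

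First I would compute the pointwise time-derivatives of the three integrands $|Rm|^2$, $|Ric|^2$, $R^2$ using the variational formulas of Proposition \ref{prop2.2}, recalling that each raised-index contraction contributes a $-h^{\cdots}$ term via \eqref{derivative}, and that the volume element varies as $\frac{d}{dt}dV_g=\tfrac12 H\,dV_g$, which is why every integrand acquires the extra summand $\tfrac12(\,\cdot\,)H$. Next I would integrate by parts repeatedly to transfer all covariant derivatives off of $h$ and onto the curvature quantities, so that the integrand becomes a pointwise contraction of a curvature $2$-tensor against $h^{ij}$. During this step the Ricci identities (for commuting covariant derivatives on the curvature tensors) and the second Bianchi identity — both in its once-contracted form, which rewrites a divergence $\nabla^l R_{lijk}$ of the Riemann tensor as a difference of covariant derivatives of the Ricci tensor, and in its twice-contracted form $\nabla^i R_{ij}=\tfrac12 R_{,j}$ — are used to convert the divergences of $Rm$ and $Ric$ that appear into $\triangle R_{ij}$, $R_{,ij}$, $\triangle R$, and the quadratic curvature terms $R^{pl}R_{ipjl}$, $R_{jp}R_{i}^{p}$, and $R_{i}^{\ plk}R_{jplk}$. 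Finally, because $h$ is symmetric only the part of the contracted tensor that is symmetric in $(i,j)$ contributes, so I would symmetrize in $(i,j)$ and collect terms to obtain the stated expressions for $\nabla\mathcal{R}$, $\nabla\rho$, $\nabla\mathcal{S}$.

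The main obstacle is the computation for $\mathcal{R}$, specifically the term $4\int_M R^{lijk}h_{ij,lk}\,dV_g$, which requires integrating by parts twice to produce $4\int_M(\nabla_k\nabla_l R^{lijk})h_{ij}\,dV_g$ and then a careful cascade of the second Bianchi identity together with the Ricci commutation identities to convert the double divergence $\nabla_k\nabla_l R^{lijk}$ into the combination $-\triangle R_{ij}+R_{,ij}$ plus quadratic curvature corrections; keeping the index placements and signs straight through this reduction, and correctly symmetrizing the asymmetric intermediate expressions in $(i,j)$, is where essentially all of the bookkeeping lies. By contrast the $\rho$ and $\mathcal{S}$ computations are routine: for $\rho$ one integrates by parts in $R^{ik}(h^j_{i,kj}+h^j_{k,ij}-\triangle h_{ik}-H_{,ik})$, using the twice-contracted Bianchi identity to produce the $-\tfrac12(\triangle R)g_{ij}$ term, while for $\mathcal{S}$ the two derivatives simply move onto $R$ to give $2R_{,ij}-2(\triangle R)g_{ij}$. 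Once the three gradients are in hand, the formula for $\nabla\mathcal{F}_{s,\tau}$ follows immediately by linearity, completing the proof.
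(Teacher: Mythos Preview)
Your proposal is correct and follows essentially the same approach as the paper: compute the pointwise time-derivatives of $|Rm|^2$, $|Ric|^2$, $R^2$ via Proposition~\ref{prop2.2}, add the volume term $\tfrac12(\cdot)H$, integrate by parts (using the Bianchi and Ricci identities) to put each first variation in the form $\int_M(\cdots)_{ij}h^{ij}\,dV_g$, and read off the gradients, with $\nabla\mathcal{F}_{s,\tau}$ following by linearity. The paper suppresses the integration-by-parts details that you spell out (in particular the reduction of $4\int_M R^{lijk}h_{ij,lk}\,dV_g$), but the route is the same.
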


  By the Lagrangian multiplier method, a Riemannian metric $g\in \mathscr{M}_1$ is critical for $\mathcal{F}_{s,\tau}|\mathscr{M}_1$ if and only if it satisfies the equation 
\[
(\nabla\mathcal{F}_{s,\tau})_{ij}=cg_{ij}  
\tag{2.6}\label{Eul}
\]
for some constant $c$.
 The Euler-Lagrange equations for $\mathcal{F}_{s,\tau}|\mathscr{M}_1$ is obtained after a simple computation.
 
\begin{thm} Let $M$ be a compact $n$-dimensional Riemannian manifold. Then the Euler-Lagrange equations of $\mathcal{F}_{s,\tau}|\mathscr{M}_1$ are 
\begin{align*}
-(4+s)\triangle R_{ij}+(2+s+2\tau)R_{,ij}+\frac{2-2\tau}{n}\triangle Rg_{ij}-2R_{i}^{\ plk}R_{jplk}-(4+2s)R^{pl}R_{ipjl}\\
+4R_{jp}R_{i}^{p}-2\tau RR_{ij}+\frac{2}{n}\big(|Rm|^2g_{ij}+s|Ric|^2g_{ij}+\tau R^2g_{ij}\big)=0, \tag{2.7}\label{Eul1}\\
(n-4)\big(|Rm|^2+s|Ric|^2+\tau|R|^2\big)-(4+ns+4(n-1)\tau)\triangle R=2nc.\tag{2.8}\label{Eul2}
\end{align*}\label{Eulequ}
\end{thm}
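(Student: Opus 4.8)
The plan is to read off both Euler--Lagrange equations directly from the gradient formula \eqref{F-gradient} together with the critical-point characterization \eqref{Eul}, which states that $g\in\mathscr{M}_1$ is critical for $\mathcal{F}_{s,\tau}|\mathscr{M}_1$ precisely when $(\nabla\mathcal{F}_{s,\tau})_{ij}=c\,g_{ij}$ for some constant $c$. The point is simply to split this tensor identity into its $g$-trace and its trace-free part: the (rescaled) trace will be \eqref{Eul2}, and the trace-free part will be \eqref{Eul1}. No new variational input is needed beyond Lemma~2.1.

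First I would take the $g$-trace of the three gradient formulas in Lemma~2.1. Using the elementary contractions $g^{ij}R_i^{\ plk}R_{jplk}=|Rm|^2$, $g^{ij}R^{pl}R_{ipjl}=|Ric|^2$, $g^{ij}R_{jp}R_i^{\ p}=|Ric|^2$, $g^{ij}R_{,ij}=\triangle R$, $g^{ij}\triangle R_{ij}=\triangle R$ and $g^{ij}g_{ij}=n$ (all with the sign and index conventions fixed in Lemma~2.1), one finds $\tr_g(\nabla\mathcal{R})=\tfrac{n-4}{2}|Rm|^2-2\triangle R$, $\tr_g(\nabla\rho)=\tfrac{n-4}{2}|Ric|^2-\tfrac{n}{2}\triangle R$ and $\tr_g(\nabla\mathcal{S})=\tfrac{n-4}{2}R^2-2(n-1)\triangle R$, hence
\[
\tr_g(\nabla\mathcal{F}_{s,\tau})=\tfrac{n-4}{2}\big(|Rm|^2+s|Ric|^2+\tau R^2\big)-\big(2+\tfrac{ns}{2}+2(n-1)\tau\big)\triangle R .
\]
Taking the $g$-trace of \eqref{Eul} gives $\tr_g(\nabla\mathcal{F}_{s,\tau})=nc$, and multiplying the resulting scalar identity by $2$ produces exactly \eqref{Eul2}.

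To obtain \eqref{Eul1} I would subtract $\tfrac1n\tr_g(\nabla\mathcal{F}_{s,\tau})\,g_{ij}$ from both sides of \eqref{Eul}. The right-hand side becomes $c\,g_{ij}-\tfrac1n(nc)\,g_{ij}=0$, so it remains to compute the trace-free part of $(\nabla\mathcal{F}_{s,\tau})_{ij}$. In $(\nabla\mathcal{F}_{s,\tau})_{ij}$ the terms not proportional to $g_{ij}$ already appear as $-(4+s)\triangle R_{ij}+(2+s+2\tau)R_{,ij}-2R_i^{\ plk}R_{jplk}-(4+2s)R^{pl}R_{ipjl}+4R_{jp}R_i^{\ p}-2\tau RR_{ij}$, while the $g_{ij}$-part is $\tfrac12(|Rm|^2+s|Ric|^2+\tau R^2)\,g_{ij}-(\tfrac s2+2\tau)(\triangle R)\,g_{ij}$. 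Subtracting $\tfrac1n\tr_g(\nabla\mathcal{F}_{s,\tau})g_{ij}$, the coefficient of $(|Rm|^2+s|Ric|^2+\tau R^2)g_{ij}$ becomes $\tfrac12-\tfrac{n-4}{2n}=\tfrac2n$ and the coefficient of $(\triangle R)g_{ij}$ becomes $-(\tfrac s2+2\tau)+\tfrac1n\big(2+\tfrac{ns}{2}+2(n-1)\tau\big)=\tfrac{2-2\tau}{n}$, which is precisely \eqref{Eul1}.

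The argument is essentially bookkeeping; the only place requiring care is keeping the sign and index conventions in the contracted curvature terms $g^{ij}R_{ipjl}$ and $g^{ij}R_i^{\ plk}R_{jplk}$ consistent with those of Lemma~2.1, and not mixing up the $\triangle R$ and $\triangle R_{ij}$ contributions when forming the trace-free part. There is no analytic obstacle, since the genuinely substantive work—the first-variation computation and the integrations by parts—is already contained in the gradient formula \eqref{F-gradient}.
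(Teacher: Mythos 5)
Your proposal is correct and follows essentially the same route as the paper: the paper also obtains \eqref{Eul2} by tracing the critical-point equation \eqref{Eul} and then recovers \eqref{Eul1} by substituting the trace back (equivalently, taking the trace-free part, as you do). Your trace computations of the three gradients and the resulting coefficients $\tfrac{2}{n}$ and $\tfrac{2-2\tau}{n}$ check out, so the argument is complete.
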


In fact, \eqref{Eul2} comes from \eqref{Eul} by taking trace on both sides. Substituting \eqref{Eul2} into \eqref{Eul} we can get \eqref{Eul1} directly. 

From Theorem \ref{Eulequ}, we know that any compact Riemannian manifold $(M,g)$ with constant sectional curvature is a critical metric for $\mathcal{F}_{s,\tau}$ on $\mathscr{M}_1$. However, Einstein metric can not always be a critical point of these functionals.

\begin{coro}(\cite{Besse, Catino2015}) Restricting on $\mathscr{M}_1$, an Einstein metric $g$ is a critical point of $\mathcal{F}_{s,\tau}$ if and only if the metric $g$ satisfies
\[
R_{i}^{\ plk}R_{jplk}=\frac{1}{n}|Rm|^2g_{ij}.
\]
\end{coro}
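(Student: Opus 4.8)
The plan is to feed the Einstein condition directly into the Euler--Lagrange system of Theorem \ref{Eulequ}. First I would observe that, by the Lagrange multiplier characterization \eqref{Eul}, a metric $g\in\mathscr{M}_1$ is critical for $\mathcal{F}_{s,\tau}$ exactly when $(\nabla\mathcal{F}_{s,\tau})_{ij}=cg_{ij}$ for some constant $c$; taking the trace determines $c$ through \eqref{Eul2}, and substituting that value back recovers precisely the trace-free equation \eqref{Eul1}. Since this reduction is reversible, criticality of $g$ is equivalent to \eqref{Eul1}, so it suffices to analyze \eqref{Eul1} under the hypothesis $R_{ij}=\tfrac{R}{n}g_{ij}$.

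Next I would perform the Einstein substitution. For $n\ge 3$ the contracted second Bianchi identity (Schur's lemma) forces $R$ to be constant, so every derivative term in \eqref{Eul1} vanishes: $\triangle R_{ij}=0$, $R_{,ij}=0$, $\triangle R=0$. The remaining algebraic terms evaluate to $R^{pl}R_{ipjl}=\tfrac{R}{n}R_{ij}=\tfrac{R^2}{n^2}g_{ij}$, $R_{jp}R_i^{\ p}=\tfrac{R^2}{n^2}g_{ij}$, $RR_{ij}=\tfrac{R^2}{n}g_{ij}$, $|Ric|^2=\tfrac{R^2}{n}$, whereas $|Rm|^2$ and the tensor $R_i^{\ plk}R_{jplk}$ are left untouched. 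Inserting these into \eqref{Eul1}, one checks that the combined coefficient of $g_{ij}$ coming from all terms except $-2R_i^{\ plk}R_{jplk}$ and $\tfrac{2}{n}|Rm|^2g_{ij}$ equals $\tfrac{R^2}{n^2}\bigl(-(4+2s)+4+2s\bigr)+\tfrac{R^2}{n}\bigl(-2\tau+2\tau\bigr)=0$. Hence \eqref{Eul1} collapses to $-2R_i^{\ plk}R_{jplk}+\tfrac{2}{n}|Rm|^2g_{ij}=0$, which is the asserted identity, and reading the same computation in reverse yields the converse.

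I do not anticipate a genuine obstacle here: the whole content of the statement is the algebraic cancellation of every $s$-- and $\tau$--dependent lower-order contribution once the Einstein relations are substituted, together with the remark that an Einstein metric has constant scalar curvature so that \eqref{Eul2} adds no constraint. An equivalent but more conceptual route would be to note that on an Einstein manifold the gradients $\nabla\rho$ and $\nabla\mathcal{S}$ from the gradient lemma are pointwise scalar multiples of $g$, hence have vanishing trace-free part, so that the trace-free part of $\nabla\mathcal{F}_{s,\tau}$ coincides with that of $\nabla\mathcal{R}$; the claim then reduces to the classical statement that an Einstein metric is critical for $\mathcal{R}$ on $\mathscr{M}_1$ precisely when $R_i^{\ plk}R_{jplk}=\tfrac{1}{n}|Rm|^2g_{ij}$, which can be read off directly from the formula for $(\nabla\mathcal{R})_{ij}$.
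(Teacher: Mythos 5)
Your core computation is right and follows the same route the paper has in mind: the corollary is read off from Theorem \ref{Eulequ} by inserting $R_{ij}=\frac{R}{n}g_{ij}$ (with $R$ constant by Schur) into \eqref{Eul1}, and your bookkeeping showing that all $s$-- and $\tau$--dependent lower-order terms cancel, leaving $-2R_{i}^{\ plk}R_{jplk}+\frac{2}{n}|Rm|^2g_{ij}=0$, is correct. The forward direction (critical $\Rightarrow$ identity) is therefore fine.

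The gap is in the converse, at the point where you assert that the reduction to \eqref{Eul1} is ``reversible'' and that \eqref{Eul2} ``adds no constraint'' because an Einstein metric has constant scalar curvature. Equation \eqref{Eul2} also contains $|Rm|^2$, which is \emph{not} constant for a general Einstein metric, so for $n\neq 4$ the trace equation is a genuine additional condition (constancy of $|Rm|^2$), and criticality is not equivalent to \eqref{Eul1} alone without further argument. The standard fix is the divergence-freeness of gradients of Riemannian functionals (the generalized Bianchi identity, see \cite{Besse}): once the trace-free part vanishes one has $(\nabla\mathcal{F}_{s,\tau})_{ij}=\phi\, g_{ij}$ for some function $\phi$, and then $0=\delta_g(\phi g)=d\phi$, so $\phi$ is constant and \eqref{Eul} holds. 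Equivalently, on an Einstein manifold one has $\nabla^j\big(R_{i}^{\ plk}R_{jplk}\big)=\tfrac14\nabla_i|Rm|^2$, so the identity $R_{i}^{\ plk}R_{jplk}=\tfrac1n|Rm|^2g_{ij}$ forces $\big(\tfrac1n-\tfrac14\big)\nabla_i|Rm|^2=0$, i.e.\ $|Rm|^2$ is constant when $n\neq4$, while for $n=4$ the factor $n-4$ in \eqref{Eul2} makes the issue vacuous. With this one observation added, your argument -- including the alternative remark that $\nabla\rho$ and $\nabla\mathcal{S}$ are constant multiples of $g$ on an Einstein manifold, which reduces everything to the classical statement for $\mathcal{R}$ -- is complete and agrees with the paper's derivation.
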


\begin{coro} Any metric with constant sectional curvature is a critical point of $\mathcal{F}_{s,\tau}$ restricted on $\mathscr{M}_1(M^n)$.
\end{coro}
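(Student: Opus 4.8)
The plan is to reduce the claim to an algebraic cancellation by inserting the explicit curvature tensor of a constant‑curvature metric into the Euler--Lagrange system of Theorem \ref{Eulequ}. Let $(M,g)$ have constant sectional curvature $\lambda$. Then, in an orthonormal frame,
\[
R_{ijkl}=\lambda(g_{ik}g_{jl}-g_{il}g_{jk}),\qquad R_{ij}=(n-1)\lambda g_{ij},\qquad R=n(n-1)\lambda ,
\]
so $g$ is Einstein and $R$ is a constant. Hence every derivative term in \eqref{Eul1} vanishes: $R_{,ij}=0$ and $\triangle R\,g_{ij}=0$ because $R$ is constant, and $\triangle R_{ij}=0$ because $R_{ij}=\frac Rn g_{ij}$ is parallel. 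Likewise \eqref{Eul2} reduces to $(n-4)\big(|Rm|^2+s|Ric|^2+\tau R^2\big)=2nc$, which merely fixes the Lagrange multiplier $c$.

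With the derivative terms gone, \eqref{Eul1} becomes purely algebraic, and the next step is to evaluate the remaining contractions; each is a multiple of $\lambda^2 g_{ij}$. A direct computation in the orthonormal frame gives
\[
R_i^{\ plk}R_{jplk}=2(n-1)\lambda^2 g_{ij},\quad R^{pl}R_{ipjl}=(n-1)^2\lambda^2 g_{ij},\quad R_{jp}R_i^{\,p}=(n-1)^2\lambda^2 g_{ij},\quad RR_{ij}=n(n-1)^2\lambda^2 g_{ij},
\]
together with $|Rm|^2=2n(n-1)\lambda^2$, $|Ric|^2=n(n-1)^2\lambda^2$, $R^2=n^2(n-1)^2\lambda^2$. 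Substituting these into \eqref{Eul1}, the surviving terms organize into four cancelling pairs, so the total coefficient of $\lambda^2 g_{ij}$ is identically zero for every $n\ge 3$ and all $s,\tau$. Thus $(\nabla\mathcal{F}_{s,\tau})_{ij}=cg_{ij}$ with $c=\frac{n-4}{2n}\big(|Rm|^2+s|Ric|^2+\tau R^2\big)$, which by \eqref{Eul} (equivalently, by the gradient formula \eqref{F-gradient} and the Lagrange‑multiplier criterion) says precisely that $g$ is a critical point of $\mathcal{F}_{s,\tau}|\mathscr{M}_1$.

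As a cross‑check, observe that the first contraction above reads $R_i^{\ plk}R_{jplk}=2(n-1)\lambda^2 g_{ij}=\frac1n|Rm|^2 g_{ij}$, which is exactly the Einstein criticality condition in the preceding corollary; so the conclusion can also be read off from that corollary without redoing the cancellation by hand. There is no genuine difficulty in this argument: the only point requiring care is keeping the index placement and sign conventions for $R_{ijkl}$ consistent with those of Proposition \ref{prop2.2} and Theorem \ref{Eulequ}, after which the vanishing of \eqref{Eul1} is forced and the corollary follows.
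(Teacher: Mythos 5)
Your proposal is correct and follows essentially the same route as the paper: it verifies criticality by substituting $R_{ijkl}=\lambda(g_{ik}g_{jl}-g_{il}g_{jk})$ into the Euler--Lagrange system \eqref{Eul1}--\eqref{Eul2} of Theorem \ref{Eulequ} (equivalently, by checking the Einstein criticality condition $R_i^{\ plk}R_{jplk}=\frac1n|Rm|^2g_{ij}$ of the preceding corollary), which is exactly how the paper deduces the statement. Your computation of the contractions, the cancellation in \eqref{Eul1}, and the identification of the Lagrange multiplier $c$ from \eqref{Eul2} are all accurate, so you have simply supplied the details the paper leaves implicit.
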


\section{ Second variations on constant sectional curvature manifolds }

In this section, we derive the second variations of $\mathcal{F}_{s,\tau}|\mathscr{M}_1$ at the metric with constant sectional curvature. Suppose $(M,g)$ has constant sectional curvature, then for some constant $\lambda$,

\begin{equation}
R_{ijkl}=\lambda(g_{ik}g_{jl}-g_{il}g_{jk}).\label{Riemcurv}
\end{equation}
From \eqref{Riemcurv}, we also have
\begin{align}
R_{ij}=(n-1)\lambda g_{ij},\quad R=n(n-1)\lambda. \label{Riccurv}
\end{align}
Following \eqref{F-gradient}, the first variation of $\mathcal{F}_{s,\tau}$ is
\begin{align*}
 \frac{d}{dt}\mathcal{F}_{s,\tau}&=\int_M\Bigg(-2R_{i}^{\ plk}R_{jplk}+2R_{,ij}-4\triangle R_{ij}-4R^{pl}R_{ipjl}+4R_{jp}R_{i}^{p}+\frac{1}{2}|Rm|^2g_{ij}\\&\ \ \ \ +s\Big(-\triangle R_{ij}-2R^{pl}R_{ipjl}+R_{,ij}-\frac{1}{2}(\triangle R)g_{ij}+\frac{1}{2}|Ric|^2g_{ij}\Big)\\&\ \ \ \ +\tau\Big(2R_{,ij}-2(\triangle R)g_{ij}-2RR_{ij}+\frac{1}{2}R^2g_{ij}\Big)\Bigg)h^{ij}dV_g.\tag{3.3}\label{firstvariation}
\end{align*}
For convenience, we write $G=\nabla\mathcal{F}_{s,\tau}$,
\begin{align*}
G_{ij}&=-2R_{i}^{\ plk}R_{jplk}+2R_{,ij}-4\triangle R_{ij}-4R^{pl}R_{ipjl}+4R_{jp}R_{i}^{p}+\frac{1}{2}|Rm|^2g_{ij}\\&\ \ \ \ +s\Big(-\triangle R_{ij}-2R^{pl}R_{ipjl}+R_{,ij}-\frac{1}{2}(\triangle R)g_{ij}+\frac{1}{2}|Ric|^2g_{ij}\Big)\\&\ \ \ \ +\tau\Big(2R_{,ij}-2(\triangle R)g_{ij}-2RR_{ij}+\frac{1}{2}R^2g_{ij}\Big).\tag{3.4}\label{Gij}
\end{align*}
Then, we rewrite the first variation of $\mathcal{F}_{s,\tau}$ \eqref{firstvariation} as
\[
\frac{d}{dt}\mathcal{F}_{s,\tau}=\int_MG_{ij}h^{ij}dV_g. \tag{3.5}\label{LF}
\]

Differentiating  equality \eqref{LF} again, by use of  Proposition \ref{derivative}, we have
\begin{align*}
\frac{d^2}{dt^2}\bigg|_{t=0}&\mathcal{F}_{s,\tau}=\frac{d}{dt}\bigg|_{t=0}\int_MG_{ij}h^{ij}dV_g
\\&=\int_M\frac{d}{dt}\bigg|_{t=0}(G_{ij})h^{ij}dV_g+\int_M\Big(-2c|h|^2+cg^{ij}\frac{d^2}{dt^2}\bigg|_{t=0}(g_{ij})\Big)dV_g+\int_M\frac{1}{2}cH^{2}dV_g
\\&=\int_M\frac{d}{dt}\bigg|_{t=0}(G_{ij})h^{ij}dV_g-\int_Mc|h|^2dV_g.\tag{3.6}\label{secondvariation}
\end{align*}

Nextly, we concern the second variations on TT directions at some critical metric with constant sectional curvature.

\subsection{Transverse-traceless variations}

According to \eqref{Gij} and \eqref{secondvariation}, we computer $\frac{d}{dt}\big|_{t=0}(G_{ij})$ at  the metric $g$ with constant sectional curvature.
\begin{prop} If $g$ has constant sectional curvature, satisfying \eqref{Riemcurv} and \eqref{Riccurv}, then
\begin{align*}
(R_{ij,k})'&=(R_{ij}')_{,k}-\lambda(n-1)h_{ij,k},\\
(R_{ij,kl})'&=(R_{ij}')_{,kl}-\lambda(n-1)h_{ij,kl},\\
(\triangle R_{ij})'&=(\triangle R_{ij}')-\lambda(n-1)\triangle h_{ij},\\
(\triangle R)'&=\triangle(trRic')-\lambda(n-1)\triangle H.
\end{align*}\label{curvature derivative}
\end{prop}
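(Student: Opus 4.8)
The plan is to differentiate the coordinate formulas defining the covariant derivatives with respect to $t$, carefully separating the contribution of the varying connection \eqref{Christoffel} from that of the varying tensor. The one structural fact that makes everything collapse is that on a space of constant sectional curvature both $Ric$ and $R$ are parallel (by \eqref{Riccurv}), so at each order the connection-variation terms either reduce to a multiple of a covariant derivative of $h$ or vanish outright.

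First I would establish the base case. For an arbitrary $(0,2)$-tensor $S$ one has $\nabla_k S_{ij}=\partial_k S_{ij}-\Gamma^p_{ki}S_{pj}-\Gamma^p_{kj}S_{ip}$; differentiating in $t$ and regrouping the terms carrying undifferentiated $\Gamma$'s into $\nabla_k(S_{ij}')$ gives $(\nabla_k S_{ij})'=\nabla_k(S_{ij}')-(\Gamma^p_{ki})'S_{pj}-(\Gamma^p_{kj})'S_{ip}$. Taking $S=Ric$ and using $R_{pj}=(n-1)\lambda g_{pj}$, the last two terms become $-\lambda(n-1)\big[(\Gamma^p_{ki})'g_{pj}+(\Gamma^p_{kj})'g_{ip}\big]$; substituting \eqref{Christoffel} and expanding, the three-term Christoffel symmetrizations telescope and the bracket equals exactly $h_{ij,k}$. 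This gives the first identity $(R_{ij,k})'=(R_{ij}')_{,k}-\lambda(n-1)h_{ij,k}$.

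For the second identity I would apply the same differentiation rule to $\nabla_l T_{ijk}$ with $T_{ijk}:=R_{ij,k}$ viewed as a $(0,3)$-tensor; the connection-variation corrections are now multiples of $R_{\bullet j,k}$, $R_{i\bullet,k}$, $R_{ij,\bullet}$, all of which vanish because $Ric$ is parallel, so $(R_{ij,kl})'=\nabla_l\big((R_{ij,k})'\big)$, and plugging in the first identity together with $\nabla_l(h_{ij,k})=h_{ij,kl}$ yields it. Contracting with $g^{kl}$ gives the third identity: the $(g^{kl})'=-h^{kl}$ term is annihilated by $R_{ij,kl}=0$, leaving $(\triangle R_{ij})'=\triangle(R_{ij}')-\lambda(n-1)\triangle h_{ij}$. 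For the fourth identity, $R=n(n-1)\lambda$ is constant so $R_{,ij}=0$, and the identical reasoning yields $(\triangle R)'=g^{ij}(R')_{,ij}=\triangle(R')$; finally substituting $R'=g^{ij}R_{ij}'-h^{ij}R_{ij}=trRic'-\lambda(n-1)H$ — obtained by differentiating $R=g^{ij}R_{ij}$ and using \eqref{derivative} and \eqref{Riccurv} — rewrites this as $\triangle(trRic')-\lambda(n-1)\triangle H$.

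The argument is essentially bookkeeping; the only steps needing genuine care are the index contraction in the base case, namely verifying that $(\Gamma^p_{ki})'g_{pj}+(\Gamma^p_{kj})'g_{ip}$ collapses precisely to $h_{ij,k}$ rather than to a partially symmetrized variant, and correctly tracking at each successive order of differentiation which connection-variation terms are killed by the parallelism of $Ric$ and the constancy of $R$.
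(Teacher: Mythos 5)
Your proposal is correct: the paper states this proposition without proof (as a direct computation), and your argument — differentiating the coordinate formula for $\nabla$, inserting the Christoffel variation \eqref{Christoffel}, and using that $Ric=(n-1)\lambda g$ is parallel and $R$ constant so the connection-variation terms either collapse to $\lambda(n-1)h_{ij,k}$ or vanish — is exactly the computation the paper implicitly relies on. In particular your key contraction $(\Gamma^p_{ki})'g_{pj}+(\Gamma^p_{kj})'g_{ip}=h_{ij,k}$ and the identity $R'=\mathrm{tr}\,Ric'-\lambda(n-1)H$ check out, so no gap.
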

From Proposition \ref{curvature derivative}, we calculate the second variations on TT direction.
\begin{align*}
\int_M\big(R_{i}^{\ plk}R_{jplk}\big)'h^{ij}dVg&=\int_M\Big((R_{i}^{\ plk})'R_{jplk}+R_{i}^{\ plk}(R_{jplk})'\Big)h^{ij}dV_g\\
&=\int_M\big(-h^{pm}R_{imkl}R_{jpkl}-h^{kn}R_{ipkl}R_{jpnl}-h^{ls}R_{ipks}R_{jpkl}\\     &\quad+2R_{jkpl}(R_{ikpl})'\Big)h^{ij}dV_g\\
&=\int_M\Big((4-2n)\lambda^2h_{ij}+2R_{jpkl}\big(h_{iq}R_{qpkl}\\
&\quad+\frac{1}{2}(h_{pi,lk}+h_{li,pk}-h_{pl,ik}-h_{pi,kl}
-h_{ki,pl}+h_{pk,il})\big)\Big)h^{ij}dV_g\\
&=\int_M\big(2(n+1)\lambda^2|h|^2-2\lambda h\triangle h\big)dV_g.
\end{align*}
By the same way, we can get the following formulae. Here we omit the detailed calculation.
\begin{align*}
\int_M(\triangle R_{ij})'h^{ij}dV_g&=\int_M\big(-\frac{1}{2}h\triangle^2h+\lambda h\triangle h)dV_g,\\
\int_M(R_{,ij})'h^{ij}dV_g&=0,\\
\int_M(R_{i}^{l}R_{jl})'h^{ij}dV_g&=\int_M(\lambda^2(n^2-1)|h|^2-\lambda(n-1) h\triangle h)dV_g,\\
\int_M(R^{pl}R_{ipjl})'h^{ij}dV_g&=\int_M\big((n^2-n-1)\lambda^2|h|^2-\frac{1}{2}\lambda(n-2)h\triangle h\big)dV_g,\\
\int_M(|Rm|^2g_{ij})'h^{ij}dV_g&=\int_M2\lambda^2n(n-1)|h|^2dV_g,\\
\int_M(\triangle Rg_{ij})'h^{ij}dV_g&=0,\\
\int_M(|Ric|^2g_{ij})'h^{ij}dV_g&=\int_M\lambda^2n(n-1)^2|h|^2dV_g,\\
\int_M(RR_{ij})'h^{ij}dV_g&=\int_M\big(\lambda^2n^2(n-1)|h|^2-\frac{1}{2}\lambda n(n-1)h\triangle h\big)dV_g,\\
\int_M(R^2g_{ij})'h^{ij}dV_g&=\int_M\lambda^2n^2(n-1)^2|h|^2dV_g.\\
\end{align*}

Following \eqref{secondvariation} and the above equations, we can get the second variations of  $\mathcal{F}_{s,\tau}$  in TT direction.
\begin{thm}Let $(M,g)$ be a compact Riemannian manifold with constant curvature satisfying \eqref{Riemcurv} and $h$ a TT tensor. Then the second variation of $\mathcal{F}_{s,\tau}$ is
\begin{align*}
\frac{d^2}{dt^2}\bigg|_{t=0}&\mathcal{F}_{s,\tau}
=\int_M\bigg\langle\Big(2(\triangle_L+2(n-1)\lambda)(\triangle_L+(n+2)\lambda)h
\bigg.\\  \bigg.&\quad+\frac{1}{2}(\triangle_L+2(n-1)\lambda)(s\triangle_L+(n-1)(4s+2n\tau)\lambda h\Big)\bigg\rangle dV_g
\\&=\int_M\left\langle\Big(\big(\triangle_L+2(n-1)\lambda\big)\big(\frac{4+s}{2}\triangle_L+\lambda\big(2n+4+(n-1)(2s+n\tau)\lambda\big)\Big )h,h\right\rangle dV_g.
\tag{3.7}\label{TTvariation}
\end{align*}
\end{thm}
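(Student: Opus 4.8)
The plan is to start from the second-variation identity \eqref{secondvariation}, namely $\frac{d^2}{dt^2}\big|_{t=0}\mathcal{F}_{s,\tau}=\int_M(G_{ij})'h^{ij}\,dV_g-c\int_M|h|^2\,dV_g$, and to evaluate the two terms on the constant-curvature background \eqref{Riemcurv}, \eqref{Riccurv}. The Lagrange multiplier $c$ is pinned down first: since every covariant derivative of curvature vanishes, only the algebraic terms of \eqref{Gij} survive, and inserting $R_i^{\ plk}R_{jplk}=2(n-1)\lambda^2g_{ij}$, $R^{pl}R_{ipjl}=(n-1)^2\lambda^2g_{ij}$, $R_{ip}R_j^{\ p}=(n-1)^2\lambda^2g_{ij}$, $|Rm|^2=2n(n-1)\lambda^2$, $|Ric|^2=n(n-1)^2\lambda^2$, $R^2=n^2(n-1)^2\lambda^2$ into $G_{ij}=cg_{ij}$ gives $c$ as an explicit quadratic in $\lambda$ with coefficients linear in $s$ and $\tau$.

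Next I would establish the list of integral identities displayed just before the theorem. The two tools are Proposition \ref{prop2.2} (the variational formulae for $Rm$, $Ric$, $R$) and Proposition \ref{curvature derivative}, which records the correction terms coming from the variation of the Levi-Civita connection — exactly what turns $(R_{ij,kl})'$ into $(R_{ij}')_{,kl}-\lambda(n-1)h_{ij,kl}$, and similarly for $(\triangle R_{ij})'$ and $(\triangle R)'$. For each term in the list I would: insert the variational formula; commute covariant derivatives via the Ricci identity on the constant-curvature metric, so that every commutator produces a multiple of $\lambda$ times $h$; use $\tr_gh=0$ and $\delta_gh=0$ to annihilate all trace and divergence terms — this is precisely why $R'=0$ and $\nabla R=0$, forcing $\int_M(R_{,ij})'h^{ij}\,dV_g=\int_M(\triangle R\,g_{ij})'h^{ij}\,dV_g=0$; and integrate by parts. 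Each such computation collapses to a linear combination of the three basic integrals $\int_M|h|^2\,dV_g$, $\int_M\langle h,\triangle h\rangle\,dV_g$, and $\int_M\langle h,\triangle^2 h\rangle\,dV_g$.

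Then I would assemble: substitute these identities into the linear combination of terms that makes up $G_{ij}$ in \eqref{Gij}, add the $-c|h|^2$ contribution, and collect coefficients, obtaining $\frac{d^2}{dt^2}\big|_{t=0}\mathcal{F}_{s,\tau}=\int_M\big(a|h|^2+b\langle h,\triangle h\rangle+d\langle h,\triangle^2 h\rangle\big)\,dV_g$ with $a,b,d$ explicit in $n,\lambda,s,\tau$. Finally, on TT tensors over a constant-curvature metric one has $\triangle_Lh=\triangle h-2n\lambda h$ (the identity underlying the proof of Proposition \ref{LLposieigen}), so $\triangle h=\triangle_Lh+2n\lambda h$ and, after one further integration by parts, $\int_M\langle h,\triangle^2 h\rangle\,dV_g=\int_M|\triangle_Lh+2n\lambda h|^2\,dV_g$. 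Substituting these converts the whole expression into $\int_M\langle P(\triangle_L)h,h\rangle\,dV_g$ for a quadratic polynomial $P$ in $\triangle_L$; factoring $P$ exhibits the factor $\triangle_L+2(n-1)\lambda$ and yields the stated formula \eqref{TTvariation}.

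I expect the main obstacle to be the term-by-term computations of $\int_M(\triangle R_{ij})'h^{ij}\,dV_g$ and $\int_M(R_i^{\ plk}R_{jplk})'h^{ij}\,dV_g$: both mix up-to-fourth-order derivatives of $h$ with repeated use of the Ricci identity and several integrations by parts, and the bookkeeping of the many $\lambda^2|h|^2$ and $\lambda\langle h,\triangle h\rangle$ contributions is where errors most easily enter. The reason the final answer is clean is that all the commutator and boundary terms conspire to leave a common factor $\triangle_L+2(n-1)\lambda$ — the same operator that governs the spectrum of $\triangle_L$ on TT tensors in Proposition \ref{LLposieigen}; checking that this structural cancellation genuinely occurs is the crux of the argument.
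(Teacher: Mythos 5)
Your proposal is correct and follows essentially the same route as the paper: it starts from the second-variation identity \eqref{secondvariation} with the multiplier $c$ evaluated at the constant-curvature metric, derives the same list of term-by-term integral identities via Proposition \ref{curvature derivative} (Ricci identities, the TT conditions, and integration by parts), and assembles them, rewriting $\triangle h=\triangle_L h+2n\lambda h$ on TT tensors to obtain the factored expression \eqref{TTvariation}. No essential difference from the paper's argument.
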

When $\lambda=0$, \eqref{TTvariation} becomes
\begin{align*}
\frac{d^2}{dt^2}\bigg|_{t=0}\mathcal{F}_{s,\tau}
=\int_M2(1+\frac{s}{4})h\triangle^2hdV_g.\tag{3.8}\label{TT0}
\end{align*}
 By \eqref{TT0}, we can get the following corollary easily.
 
\begin{coro} Let $(M,g)$ be a compact manifold  with constant sectional curvature $\lambda=0$. Then the second variation of $\mathcal{F}_{s,\tau}$ at $g$ is non-negative as $s>-4$ and non-positive as $s<-4$ when the variation is restricted on TT directions.
\end{coro}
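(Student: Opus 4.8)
The plan is to read the conclusion off the formula \eqref{TT0}, which has already been established for a flat compact $(M,g)$ and a TT variation $h$. Writing the second variation as $Q(h):=\frac{d^2}{dt^2}\big|_{t=0}\mathcal{F}_{s,\tau}$, that formula says
\[
Q(h)=2\Big(1+\tfrac{s}{4}\Big)\int_M h\,\triangle^2 h\,dV_g=\tfrac{4+s}{2}\int_M h\,\triangle^2 h\,dV_g,
\]
where $\triangle=g^{kl}\nabla_k\nabla_l$ is the rough Laplacian acting on symmetric $2$-tensors (the unit-volume constraint and the Lagrange-multiplier term are already accounted for in \eqref{TT0}, since for $\lambda=0$ the constant $c$ in \eqref{Eul2} vanishes and $\tr_g h=0$). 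Hence the whole statement reduces to controlling the sign of the quadratic form $h\mapsto\int_M h\,\triangle^2 h\,dV_g$.

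Next I would observe that on a closed manifold $\triangle$ is formally self-adjoint for the $L^2$ inner product on $S^2(M)$, so integrating by parts twice gives
\[
\int_M h\,\triangle^2 h\,dV_g=\int_M\langle\triangle h,\triangle h\rangle\,dV_g=\int_M|\triangle h|^2\,dV_g\ge 0.
\]
The first integration by parts yields $-\int_M\langle\nabla h,\nabla(\triangle h)\rangle\,dV_g$, the second the displayed identity; no curvature terms appear because $\triangle$ is paired with itself rather than commuted past another operator. This is the only computation involved and it is entirely routine, so I do not anticipate any genuine obstacle here — the corollary is essentially immediate from \eqref{TT0}.

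Combining the two displays, $Q(h)=\tfrac{4+s}{2}\int_M|\triangle h|^2\,dV_g$ for every TT tensor $h$. If $s>-4$ then $\tfrac{4+s}{2}>0$, so $Q(h)\ge 0$ and $g$ is a local minimum of $\mathcal{F}_{s,\tau}$ in TT directions; if $s<-4$ then $\tfrac{4+s}{2}<0$, so $Q(h)\le 0$ and $g$ is a local maximum in TT directions. I would also note that these inequalities cannot be sharpened to strict definiteness in general: on a flat torus every parallel TT tensor ($\nabla h=0$) has $\triangle h=0$ and hence $Q(h)=0$, so the form is only semi-definite, which is consistent with the corollary as phrased.
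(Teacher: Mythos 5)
Your proposal is correct and follows essentially the same route as the paper: the corollary is read off directly from \eqref{TT0}, with the only substantive step being the observation (left implicit in the paper) that $\int_M h\,\triangle^2 h\,dV_g=\int_M|\triangle h|^2\,dV_g\ge 0$ by self-adjointness of the rough Laplacian on a closed manifold, so the sign of the second variation is that of $\tfrac{4+s}{2}$. Your side remarks that $c=0$ when $\lambda=0$ and that the form is only semi-definite (e.g.\ parallel TT tensors on a flat torus) are accurate and consistent with the statement.
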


 Now, we focus on the case $\lambda\neq 0$ and finish the proof of Theorems 1.1 and 1.2. Firstly, if $\lambda>0$, we set $\lambda=1$. We then have
\begin{align*}
\frac{d^2}{dt^2}\bigg|_{t=0}\mathcal{F}_{s,\tau}
&=\int_M\bigg\langle\Big((\triangle_L+2(n-1))\big(\frac{4+s}{2}\triangle_L+(2n+4)+(n-1)(2s+n\tau)\big)\Big)h,h\bigg\rangle dV_g.\tag{3.9}\label{TT1}
\end{align*}
\begin{proof}[Proof of Theorem 1.1] By Proposition \ref{LLposieigen}, we know the least eigenvalue of the Lichnerowicz Laplacian on TT tensors is $4n$. Let
$-\triangle_Lh_{ij}=\lambda_Lh_{ij}$, and rewrite \eqref{TT1} as 
\begin{align*}
\frac{d^2}{dt^2}\bigg|_{t=0}\mathcal{F}_{s,\tau}
&=\int_M\Big(\big(\lambda_L-2(n-1)\big)\big(\frac{4+s}{2}\lambda_L-2n-4-(n-1)(2s+n\tau)\big)\Big)|h|^2 dV_g.\tag{3.10}\label{TT11}
\end{align*}
The first term $\lambda_L-2(n-1)>0$. We consider the second term. If $s>-4$, $\tau<\frac{6n-12}{n(n-1)}$,
\begin{align*}
\frac{4+s}{2}\lambda_L-(2n+4)-(n-1)(2s+n\tau)&>\frac{4+s}{2}4n-(2n+4)-(n-1)(2s+n\tau)\\
&>6n-12-n(n-1)\tau
\\&>0,
\end{align*}
\quad So, in this case we have $\frac{d^2}{dt^2}\bigg|_{t=0}\mathcal{F}_{s,\tau}\geq 0$.
If $s<-4$, $\tau>\frac{6n-12}{n(n-1)}$, 
\begin{align*}
\frac{4+s}{2}\lambda_L-(2n+4)-(n-1)(2s+n\tau)&<\frac{4+s}{2}4n-(2n+4)-(n-1)(2s+n\tau)\\
&<6n-12-n(n-1)\tau
\\&<0.
\end{align*}

 Therefore, in this case we have $\frac{d^2}{dt^2}\bigg|_{t=0}\mathcal{F}_{s,\tau}\leq 0$.
\end{proof}

\begin{proof}[Proof of Theorem 1.2]
 If $\lambda<0$, we set $\lambda=-1$. We have
\begin{align*}
\frac{d^2}{dt^2}\bigg|_{t=0}\mathcal{F}_{s,\tau}
&=\int_M\Big((\triangle_L-2(n-1))\big(\frac{4+s}{2}\triangle_L-((2n+4)-(n-1)(2s+n\tau)\big)\Big)hdV_g
\\&=\int_M\Big((\lambda_L+2(n-1))\big(\frac{4+s}{2}\lambda_L+((2n+4)+(n-1)(2s+n\tau)\big)\Big)|h|^2dV_g.
\tag{3.11}\label{TT-1}
\end{align*}
By Proposition \ref{negeigen}, the first term $\lambda_L+2(n-1)>0$. We  now consider the second term. If $s>-4$, and $\tau>\frac{6n-12}{n(n-1)}$,
\begin{align*}
\frac{4+s}{2}\lambda_L+(2n+4)+(n-1)(2s+n\tau)&>-\frac{4+s}{2}n+2n+4+(n-1)(2s+n\tau)\\
&>-6n+12+n(n-1)\tau
\\&>0.
\end{align*}
In this case we have $\frac{d^2}{dt^2}\big|_{t=0}\mathcal{F}_{s,\tau}\geq 0$. By the same way we know $\frac{d^2}{dt^2}\big|_{t=0}\mathcal{F}_{s,\tau}\geq 0$ when $s<-4$, $\tau<\frac{6n-12}{n(n-1)}$.
 \end{proof}

\subsection{Conformal variations}
Now, we consider the conformal variations of the functionals $\mathcal{F}_{s,\tau}$ at a Riemannian metric with constant sectional curvature satisfying \eqref{Riemcurv}. Let $\mathcal{M}_1[g]$ denote the space of unit volume metrics conformal to $g$. The tangent space of $\mathcal{M}_1[g]$ consists of functionals with mean value zero.
\begin{prop}If $g$ has constant sectional curvature, satisfying \eqref{Riemcurv} and \eqref{Riccurv}, and let  $h=fg$,
\begin{align*}
(R_{ij})'&=-\frac{1}{2}(n-2)f_{,ij}-\frac{1}{2}\triangle fg_{ij},\\
(R_{ij,kl})'&=(R_{ij}')_{,kl}-\lambda(n-1)f_{,kl}g_{ij},\\
(\triangle R_{ij})'&=(\triangle R_{ij}')-\lambda(n-1)\triangle fg_{ij},\\
(\triangle R)'&=-(n-1)\triangle^2f-\lambda n(n-1)\triangle f.
\end{align*}\label{curvaturederivative}
\end{prop}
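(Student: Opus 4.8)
The plan is to derive all four identities by specializing the general variational formulas already at hand to the conformal direction $h_{ij}=fg_{ij}$. Concretely, Proposition~\ref{prop2.2} supplies the variation of $Ric$, and Proposition~\ref{curvature derivative} supplies the variations of the iterated covariant derivatives $R_{ij,k}$, $R_{ij,kl}$, $\triangle R_{ij}$, $\triangle R$ on a background of constant sectional curvature; both hold for an arbitrary $h\in S^2(M)$, so it suffices to substitute $h=fg$ and simplify. The elementary identities I would use repeatedly are $H=\tr_g h=nf$, $h^j_i=f\delta^j_i$, $h_{ij,k}=f_{,k}g_{ij}$, $h_{ij,kl}=f_{,kl}g_{ij}$, and $\triangle h_{ij}=(\triangle f)g_{ij}$.

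First I would prove the formula for $(R_{ij})'$. Inserting $h_{ik}=fg_{ik}$ into the third line of Proposition~\ref{prop2.2},
\[
(R_{ik})'=\frac{1}{2}\big(h^j_{i,kj}+h^j_{k,ij}-\triangle h_{ik}-H_{,ik}\big),
\]
and using $h^j_i=f\delta^j_i$ (so that $h^j_{i,kj}=h^j_{k,ij}=f_{,ik}$), together with $\triangle h_{ik}=(\triangle f)g_{ik}$ and $H_{,ik}=nf_{,ik}$, the right-hand side collapses to $\frac{1}{2}\big((2-n)f_{,ik}-(\triangle f)g_{ik}\big)=-\frac{1}{2}(n-2)f_{,ik}-\frac{1}{2}(\triangle f)g_{ik}$, which (renaming the free indices) is the first identity.

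The second and third identities then drop straight out of Proposition~\ref{curvature derivative}: the relations $(R_{ij,kl})'=(R_{ij}')_{,kl}-\lambda(n-1)h_{ij,kl}$ and $(\triangle R_{ij})'=\triangle(R_{ij}')-\lambda(n-1)\triangle h_{ij}$ hold for any $h$, so substituting $h_{ij,kl}=f_{,kl}g_{ij}$ and $\triangle h_{ij}=(\triangle f)g_{ij}$ gives exactly the stated expressions. For the last identity I would use the fourth line of Proposition~\ref{curvature derivative}, $(\triangle R)'=\triangle(\tr Ric')-\lambda(n-1)\triangle H$; taking the $g$-trace of the formula just established for $(R_{ij})'$ gives $\tr Ric'=g^{ij}(R_{ij})'=-(n-1)\triangle f$, and together with $\triangle H=n\triangle f$ this yields $(\triangle R)'=-(n-1)\triangle^2 f-\lambda n(n-1)\triangle f$.

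I do not expect any real obstacle: everything is a direct substitution. The only mildly delicate point is keeping track of the fact that $(\nabla R_{ij})'$ differs from $\nabla(R_{ij}')$ by a term built from the variation of the Christoffel symbols \eqref{Christoffel}, and that on a constant-curvature background (where $Ric$ and $R$ are parallel, so $R_{ij,k}=0$ and $R_{,i}=0$) this correction reduces to the clean $-\lambda(n-1)h_{ij,k}$-type terms; but this reduction is exactly what Proposition~\ref{curvature derivative} already records, so the present proposition is genuinely just the computation with $h=fg$.
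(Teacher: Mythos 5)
Your proposal is correct and follows essentially the same route the paper intends: the paper states this proposition without proof as a direct computation, specializing the general variational formula for $(R_{ij})'$ from Proposition \ref{prop2.2} and the constant-curvature commutation identities of Proposition \ref{curvature derivative} to $h=fg$, which is exactly what you do. Your substitutions ($H=nf$, $h_{ij,kl}=f_{,kl}g_{ij}$, $g^{ij}(R_{ij})'=-(n-1)\triangle f$) all check out, so nothing is missing.
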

Following proposition \eqref{curvaturederivative}, we have,
\begin{align*}
\int_M\big(R_{i}^{\ plk}R_{jplk}\big)'fg^{ij}dVg&=\int_M\Big((R_{i}^{\ plk})'R_{jplk}+R_{i}^{\ plk}(R_{jplk})'\Big)h^{ij}dV_g,\\
&=\int_M\big(-fg^{pm}R_{imkl}R_{jpkl}-fg^{kn}R_{ipkl}R_{jpnl}-fg^{ls}R_{ipks}R_{jpkl}\\     &\quad+2R_{jkpl}(R_{ikpl})'\Big)fg^{ij}dV_g,\\
&=\int_M\big(-3|Rm|^2f^2+2fR^{ikjl}R^{'}_{ikjl}\big)dV_g,\\
&=\int_M\big(-3|Rm|^2f^2+2\lambda f(g^{ij}g^{kl}-g^{il}g^{kj})R^{'}_{ikjl}\big)dV_g,\\
&=\int_M\Big(-3|Rm|^2f^2+4\lambda fg^{ij}\big((g^{kl}R_{ikjl})^{'}+fg^{kl}R_{ikjl}\big)\Big)dV_g,\\
&=\int_M\Big(-2\lambda^2n(n-1)f^2-4\lambda(n-1)f\triangle f\Big)dV_g.\\
\end{align*}
By the same way, we give the following formulas.
\begin{align*}
\int_M(\triangle R_{ij})'fg^{ij}dV_g&=\int_M\Big(-(n-1)f\triangle^2f-\lambda n(n-1)f\triangle f\Big)dV_g,\\
\int_M(R_{,ij})'fg^{ij}dV_g&=\int_M\Big(-(n-1)f\triangle^2f-\lambda n(n-1)f\triangle f\Big)dV_g,\\
\int_M(R_{i}^{l}R_{jl})'fg^{ij}dV_g&=\int_M\Big(-\lambda^2n(n-1)^2f^2-2\lambda(n-1)^2f\triangle f\Big)dV_g,\\
\int_M(R^{pl}R_{ipjl})'fg^{ij}dV_g&=\int_M\Big(-\lambda^2n(n-1)^2f^2-2\lambda(n-1)^2f\triangle f\Big)dV_g,\\
\int_M(|Rm|^2g_{ij})'fg^{ij}dV_g&=\int_M\Big(-2\lambda^2n^2(n-1)f^2-4\lambda n(n-1)f\triangle f\Big)dV_g,\\
\int_M(\triangle Rg_{ij})'fg^{ij}dV_g&=\int_M\Big(-n(n-1)f\triangle^2f-\lambda n^2(n-1)f\triangle f\Big)dV_g,\\
\int_M(|Ric|^2g_{ij})'fg^{ij}dV_g&=\int_M\Big(-\lambda^2n^2(n-1)^2f^2-2\lambda n(n-1)^2f\triangle f\Big)dV_g,\\
\int_M(RR_{ij})'fg^{ij}dV_g&=\int_M\Big(-\lambda^2n^2(n-1)^2f^2-2\lambda n(n-1)^2f\triangle f\Big)dV_g,\\
\int_M(R^2g_{ij})'fg^{ij}dV_g&=\int_M\Big(-\lambda^2n^3(n-1)^2f^2-2\lambda n^2(n-1)^2f\triangle f\Big)dV_g,\\
\end{align*}

Using the above equalities, we get the conformal variations for $\mathcal{F}_{s,\tau}$ at metrics with constant sectional curvature.
\begin{thm}Let $(M,g)$ is a compact Riemannian manifold with constant curvature satisfying \eqref{Riemcurv} and $h=fg$ with $\int_MfdV_g=0$. Then the second variation of $\mathcal{F}_{s,\tau}$ is 
\begin{align*}
\frac{d^2}{dt^2}\bigg|_{t=0}\mathcal{F}_{s,\tau}
&=\int_M\Big(2(n-1)f\triangle^2f+8\lambda(n-1)f\triangle f-2\lambda^2(n^2-n)(n-4)f^2\\
&+s\big(\frac{1}{2}n(n-1)f\triangle^2f-\frac{1}{2}\lambda(n-1)(n^2-10n+8)f\triangle f-\lambda^2n(n-1)^2(n-4)f^2\big)\\
&+\tau\big(2(n-1)^2f\triangle^2f-\lambda n(n-1)^2(n-6)f\triangle f-\lambda^2n^2(n-1)^2(n-4)f^2\big)\Big)dVg.\tag{3.12}\label{CFvariation}
\end{align*}
\end{thm}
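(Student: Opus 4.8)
The statement to be proved is the conformal second variation formula \eqref{CFvariation}. The plan is to substitute $h=fg$ into the reduced identity \eqref{secondvariation} and collect terms, using the list of integral formulas displayed just before the statement together with the value of the Lagrange constant $c$.

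First I would pin down $c$. Since $g$ has constant sectional curvature, $R$ is constant, so $\triangle R=0$ and the norms $|Rm|^2=2n(n-1)\lambda^2$, $|Ric|^2=n(n-1)^2\lambda^2$, $R^2=n^2(n-1)^2\lambda^2$ are all constant; hence the trace Euler--Lagrange equation \eqref{Eul2} gives $c=\tfrac{n-4}{2n}\bigl(|Rm|^2+s|Ric|^2+\tau R^2\bigr)$, a constant. This constancy is precisely what makes the reduction \eqref{secondvariation} valid (it used \eqref{2.2} together with $c$ being constant), so \eqref{secondvariation} applies verbatim. With $h=fg$ one has $h^{ij}=fg^{ij}$, $H=nf$, $|h|^2=nf^2$, so the last term of \eqref{secondvariation} is $-\int_M c\,n f^2\,dV_g$, which contributes an explicit $f^2$-term computed from the three constant-curvature norms above.

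Next I would expand $G_{ij}$ as in \eqref{Gij}, organised into its ten tensorial blocks $R_i^{\ plk}R_{jplk}$, $R_{,ij}$, $\triangle R_{ij}$, $R^{pl}R_{ipjl}$, $R_{jp}R_i^{\ p}$, $|Rm|^2g_{ij}$, $(\triangle R)g_{ij}$, $|Ric|^2g_{ij}$, $RR_{ij}$, $R^2g_{ij}$, with coefficients $-2$, $2+s+2\tau$, $-(4+s)$, $-(4+2s)$, $4$, $\tfrac12$, $-(\tfrac s2+2\tau)$, $\tfrac s2$, $-2\tau$, $\tfrac\tau2$ respectively (these come from reading off \eqref{F-gradient} with the $s$ and $\tau$ weights). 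Differentiating in $t$, contracting $\frac{d}{dt}\big|_{t=0}G_{ij}$ with $fg^{ij}$ and integrating, each of the ten blocks is exactly one of the ten integral identities listed immediately before the statement, which themselves follow from Proposition~\ref{curvaturederivative}. Adding the ten contributions weighted by the above coefficients, together with $-\int_M c\,nf^2\,dV_g$, produces an integrand that is a linear combination of $f\triangle^2 f$, $f\triangle f$ and $f^2$ with $\lambda$-dependent coefficients; reading these off yields \eqref{CFvariation}. (If one prefers, $\int_M f\triangle^2 f\,dV_g=\int_M(\triangle f)^2\,dV_g$ can be used afterwards, but it is not needed to match the stated form.)

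The only genuine work, and the place where slips occur, is the bookkeeping: carrying the ten coefficients through the ten integral identities without sign errors, tracking the $f^2$ contribution coming from the $-c|h|^2$ term, and combining the $f\triangle^2 f$ pieces, which arrive from three different blocks ($R_{,ij}$, $\triangle R_{ij}$, and $(\triangle R)g_{ij}$). There is no conceptual obstacle; everything required has already been established in \eqref{2.2}, Proposition~\ref{prop2.2}, the gradient formula \eqref{F-gradient}, Theorem~\ref{Eulequ}, and Proposition~\ref{curvaturederivative}.
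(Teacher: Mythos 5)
Your proposal is correct and follows essentially the same route as the paper: substitute $h=fg$ into the reduced second-variation identity \eqref{secondvariation}, use the ten conformal integral identities derived from Proposition \ref{curvaturederivative}, weight them by the coefficients read off from \eqref{Gij}, and account for the $-c|h|^2$ term with $c$ determined by the traced Euler--Lagrange equation \eqref{Eul2}. Your bookkeeping plan (including the explicit value of $c$ at constant curvature) does reproduce \eqref{CFvariation} exactly, so nothing is missing.
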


When $\lambda=0$, \eqref{CFvariation} becomes
\[
\frac{d^2}{dt^2}\bigg|_{t=0}\mathcal{F}_{s,\tau}=\int_M\frac{1}{2}(n-1)(sn+4(n-1)\tau+4)f\triangle^2fdVg.\tag{3.13}\label{CF0}
\]
We then have 
\begin{coro} Let $(M,g)$ be a compact manifold with constant sectional curvature $\lambda=0$. Then the second variation of $\mathcal{F}_{s,\tau}$ on $(M,g)$ is nonnegative as $s+4\tau>\frac{4}{n}(\tau-1)$ when the variation is  restricted on the conformal direction.
\end{coro}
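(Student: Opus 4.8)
The plan is to derive the corollary directly from the flat-case conformal second variation formula \eqref{CF0}, which the theorem just above already establishes by specializing \eqref{CFvariation} to $\lambda=0$. So the only substantive work is to analyze the sign of the quadratic form
\[
Q(f)=\int_M\tfrac{1}{2}(n-1)\bigl(sn+4(n-1)\tau+4\bigr)f\triangle^2 f\,dV_g
\]
over the tangent space $T_g\M_1([g])=\{f\in C^\infty(M):\int_M f\,dV_g=0\}$.

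First I would rewrite $\int_M f\triangle^2 f\,dV_g$. Since $M$ is compact without boundary, integrating by parts twice gives $\int_M f\triangle^2 f\,dV_g=\int_M|\triangle f|^2\,dV_g\ge 0$, with equality iff $\triangle f=0$, i.e. (by compactness) iff $f$ is constant; and the only constant in $T_g\M_1([g])$ is $f\equiv 0$. Hence $\int_M f\triangle^2 f\,dV_g>0$ for every nonzero admissible $f$, and the sign of $Q(f)$ is governed entirely by the scalar coefficient $(n-1)(sn+4(n-1)\tau+4)$. Since $n\ge 3$, the factor $n-1>0$, so $Q(f)\ge 0$ for all admissible $f$ exactly when $sn+4(n-1)\tau+4\ge 0$, and $Q(f)\le 0$ exactly when $sn+4(n-1)\tau+4\le 0$.

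The last step is purely algebraic: rearranging $sn+4(n-1)\tau+4\ge 0$ gives $sn+4n\tau\ge 4\tau-4$, i.e. $n(s+4\tau)\ge 4(\tau-1)$, i.e. $s+4\tau\ge\tfrac{4}{n}(\tau-1)$ — which is exactly the stated threshold. Thus the second variation in conformal directions is nonnegative when $s+4\tau>\tfrac{4}{n}(\tau-1)$ (and, symmetrically, nonpositive when the reverse strict inequality holds, which is the content stated in Theorem 1.4). There is really no obstacle here: the only points to be careful about are (i) restricting to mean-zero $f$ so that the rough Laplacian has no kernel on the admissible space, making the inequality $\int f\triangle^2 f>0$ strict, and (ii) keeping track of the elementary algebraic manipulation that converts the coefficient condition into the displayed form. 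I would present these two observations and the one line of integration by parts, then conclude.
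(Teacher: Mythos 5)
Your proposal is correct and follows essentially the same route as the paper: the corollary is read off directly from the flat-case formula \eqref{CF0}, whose sign is determined by the coefficient $(n-1)(sn+4(n-1)\tau+4)$ once one notes $\int_M f\triangle^2 f\,dV_g=\int_M|\triangle f|^2\,dV_g\ge 0$. The paper states the corollary without spelling out these details, and your integration by parts, the mean-zero remark, and the algebraic rearrangement to $s+4\tau>\frac{4}{n}(\tau-1)$ are exactly the intended (and correct) justification.
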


Next we consider the case $\lambda\neq 0$ and finish the proof of Theorems 1.5 and 1.6. Firstly, if $\lambda>0$, we set $\lambda=1$. We then have 
\begin{align*}
\frac{d^2}{dt^2}\bigg|_{t=0}\mathcal{F}_{s,\tau}
=\int_M\bigg\langle\Big(n&-1\Big)\Big(\triangle+n\Big)\Big(\frac{ns-4\tau+4n\tau+4}{2} \triangle \bigg.\\ \bigg. &-\big(n-4\big)\big(n^2\tau+ns-n\tau-s+2\big)\Big)f ,f\bigg\rangle dV_g.\tag{3.14}\label{CF1}
\end{align*}

\begin{prop}(\cite{Aubin1998}) If $(M^n, g)$ is a compact manifold  satisfying
\[
Ric\geq(n-1)\cdot g,
\]
then the lowest non-trivial eigenvalue satisfies $\lambda_1\geq n$, and the equality holds if and any if $(M^n, g)$ is isometric to $(S^n, g_s)$. \label{CFeigen}
\end{prop}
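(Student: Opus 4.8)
The plan is to combine the Lichnerowicz--Bochner argument for the inequality $\lambda_1\ge n$ with Obata's rigidity theorem for the equality case. Let $f$ be a non-constant eigenfunction with $\triangle f=-\lambda_1 f$ (recall that the paper's convention is $-\triangle f=\lambda_1 f$). I would start from the Bochner formula
\[
\frac{1}{2}\triangle|\nabla f|^2=|\nabla^2 f|^2+\langle\nabla f,\nabla\triangle f\rangle+Ric(\nabla f,\nabla f),
\]
integrate over the closed manifold $M$ so that $\int_M\triangle|\nabla f|^2\,dV_g=0$, and use $\triangle f=-\lambda_1 f$ to obtain $\langle\nabla f,\nabla\triangle f\rangle=-\lambda_1|\nabla f|^2$ together with $\int_M(\triangle f)^2\,dV_g=\lambda_1\int_M|\nabla f|^2\,dV_g$. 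The pointwise Cauchy--Schwarz bound $|\nabla^2 f|^2\ge\frac{1}{n}(\triangle f)^2$ and the hypothesis $Ric(\nabla f,\nabla f)\ge(n-1)|\nabla f|^2$ then yield
\[
0\ge\Big(\tfrac{\lambda_1}{n}-\lambda_1+(n-1)\Big)\int_M|\nabla f|^2\,dV_g,
\]
and since $\int_M|\nabla f|^2\,dV_g>0$ this forces $\lambda_1\tfrac{n-1}{n}\ge n-1$, i.e. $\lambda_1\ge n$.

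For the equality case $\lambda_1=n$, every inequality above must be saturated; in particular the Cauchy--Schwarz step forces the trace-free Hessian of the corresponding eigenfunction to vanish, so $\nabla^2 f=\tfrac{\triangle f}{n}g=-fg$ pointwise on $M$. I would then invoke Obata's theorem: a compact Riemannian manifold carrying a non-constant solution of $\nabla^2 f+fg=0$ is isometric to the unit round sphere $(S^n,g_s)$. Conversely, on $(S^n,g_s)$ the first non-zero eigenvalue equals $n$, realized by the restrictions of the linear coordinate functions, so both directions of the equality characterization hold. Since the statement is attributed to \cite{Aubin1998}, one may alternatively just cite that source for the complete argument.

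The only non-routine ingredient is the rigidity half, namely deducing from $\nabla^2 f=-fg$ that $(M,g)$ is the round sphere. The standard route is to normalize $f$ so that $\max_M f=1$ at a point $p$, to show along unit-speed geodesics $\gamma$ issuing from $p$ that $(f\circ\gamma)''+(f\circ\gamma)=0$, hence $f=\cos r$ with $r=d(p,\cdot)$, to deduce $|\nabla f|^2=1-f^2$, to conclude that all geodesics from $p$ have length $\pi$ and reconverge at the unique minimum point of $f$, and finally to compute the metric in geodesic polar coordinates about $p$ and identify it with $g_s$. The Bochner computation in the first part is entirely routine, so this rigidity step is the main point requiring care if a self-contained proof is desired.
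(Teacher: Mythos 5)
Your proposal is correct: the Lichnerowicz--Bochner argument for $\lambda_1\ge n$ and Obata's rigidity theorem for the equality case are exactly the standard proof, and indeed the paper offers no proof of this proposition at all, simply citing Aubin's book, which contains the same argument you outline. The only part left as a sketch is the Obata rigidity step, but your outline of it (reducing to $\nabla^2 f=-fg$ pointwise via equality in Cauchy--Schwarz, then $f=\cos r$ along geodesics and identification with the round metric in polar coordinates) is the correct route and fills in more detail than the paper provides.
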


 Let $\mu$ be a non-zero eigenvalue of $\triangle$. We consider the following polynomial
\[
P_1(\mu)=\big(n-1\big)\big(\mu-n\big)\big(\frac{ns-4\tau+4n\tau+4}{2}\mu+(n-4)(n^2\tau+ns-n\tau-s+2)\big).\tag{3.15}\label{CFP}
\]

Since the variation is restricted to $\mathcal{M}_1[g]$,  we should to prove that the second variation of the functional on functions with mean value zero is non-negative (or non-positive in the maximizing case).

\begin{proof}
[Proof of Theorem 1.5]
 By Proposition \eqref{CFeigen}, the second term $\mu-n\geq 0$. Then the sign of the second variations for $\mathcal{F}_{s,\tau}$ is determined by the third term. If $n=4$,
\[
P_1(\mu)=6\big(\mu-4\big)(s+3\tau+1)\mu.\tag{3.16}\label{CFP4}
\]
Form \eqref{CFP4}, the first part of Theorem 1.5 can get easily.

 If $n=3$,
\[
P_1(\mu)=2(\mu-3)\big(\frac{3s+8\tau+4}{2}\mu-(6\tau+2s+2)\big).\tag{3.17}\label{{CFP3}}
\]
 The functional will be minimizing if $\frac{3s+8\tau+4}{2}>0$ and
\[
\frac{3s+8\tau+4}{2}\mu-(6\tau+2s+2)\geq\frac{5}{2}s+6\tau+4>0.
\]
 Then $P_1(\mu)\geq 0$ if the following hold:
\begin{equation*}
\left\{
\begin{array}{lr}
\tau<1 &\\
s>-\frac{8}{3}\tau-\frac{4}{3}&
\end{array}
\right.
\quad or\quad
\left\{
\begin{array}{lr}
\tau>1&\\
s>-\frac{12}{5}\tau-\frac{8}{5}.&
\end{array}
\right.
\end{equation*}

 Similarly we can get $P_1(\mu)\leq 0$ if the following hold:
 \begin{equation*}
\left\{
\begin{array}{lr}
\tau<1 &\\
s<-\frac{12}{5}\tau-\frac{8}{5}&
\end{array}
\right.
\quad or\quad
\left\{
\begin{array}{lr}
\tau>1&\\
s<-\frac{8}{3}\tau-\frac{4}{3}.&
\end{array}
\right.
\end{equation*}

When $n\geq 5$, the functional will be minimizing if $ns-4\tau+4n\tau+4>0$ and
\[
\dfrac{ns-4\tau+4n\tau+4}{2}n+(n-4)(n^2\tau+ns-n\tau-s+2)>0.
\]
Then $P_1(\mu)\geq 0$ if the following hold
\begin{equation*}
\left\{
\begin{array}{lr}
\tau>\frac{2}{(n-1)(n-2)} &\\
s>-\frac{4(n-1)}{n}\tau-\frac{4}{n}&
\end{array}
\right.
\quad or\quad
\left\{
\begin{array}{lr}
\tau<\frac{2}{(n-1)(n-2)}&\\
s>-\frac{2n(n-1)}{3n-4}\tau-\frac{8}{3n-4}.&
\end{array}
\right.
\end{equation*}
And we can get $P_1(\mu)\leq 0$ if the following hold
\begin{equation*}
\left\{
\begin{array}{lr}
\tau>\frac{2}{(n-1)(n-2)}&\\
s<-\frac{2n(n-1)}{3n-4}\tau-\frac{8}{3n-4}
\end{array}
\right.
\quad or\quad
\left\{
\begin{array}{lr}
\tau<\frac{2}{(n-1)(n-2)}&\\
s<-\frac{4(n-1)}{n}\tau-\frac{4}{n}.&
\end{array}
\right.
\end{equation*}
\end{proof}

Finally we consider the case $\lambda<0$. We set $\lambda=-1$. 
\begin{align*}
\frac{d^2}{dt^2}\bigg|_{t=0}\mathcal{F}_{s,\tau}
=\int_M\bigg\langle\big(n-1\big)\big(\triangle&-n\big)\big(\frac{ns-4\tau+4n\tau+4}{2}\triangle\bigg.\\ \bigg.&+(n-4)(n^2\tau-n\tau-s+2)\big)f ,f\bigg\rangle dV_g.\tag{3.18}\label{CF-1}
\end{align*}
Let $\mu$ is a non-zero eigenvalue of $\triangle$, and consider the following polynomial
\[
P_2(\mu)=\big(n-1\big)\big(\mu+n\big)\big(\frac{ns-4\tau+4n\tau+4}{2}\mu-(n-4)(n^2\tau+ns-n\tau-s+2)\big).\tag{3.19}\label{CF-1P}
\]
Since the first term of $P_2(\mu)$ is always nonnegative, then we just consider the second term.
Let $n=3$, then $P_2(\mu)$ is nonnegative if
\begin{equation*}
\left\{
\begin{array}{lr}
\tau>2 &\\
s>-\frac{8}{3}\tau-\frac{4}{3}&
\end{array}
\right.
\quad or\quad
\left\{
\begin{array}{lr}
\tau<2&\\
s>-3\tau-2.&
\end{array}
\right.
\end{equation*}
And $P_2(\mu)$ is non-positive if
\begin{equation*}
\left\{
\begin{array}{lr}
\tau>2 &\\
s<-3\tau-2&
\end{array}
\right.
\quad or\quad
\left\{
\begin{array}{lr}
\tau<2&\\
s<-\frac{8}{3}\tau-\frac{4}{3}.&
\end{array}
\right.
\end{equation*}

When $n\geq 4$, do it in the same way as Theorem 1.5. Then we finish the proof of Theorem 1.6.

\textbf{Addresses:}

\vspace{2mm} Weimin Sheng: School of Mathematical Sciences, Zhejiang University,
Hangzhou 310027, China.

\vspace{2mm} Lisheng Wang: School of Mathematical Sciences, Zhejiang university,
Hangzhou 310027, China.

\vspace{3mm}

\textbf{Email:} weimins@zju.edu.cn; wlspaper@yeah.net

\end{document}